\documentclass[AMA,STIX1COL]{WileyNJD-v2}

\articletype{Article Type}%

\received{26 April 2016}
\revised{6 June 2016}
\accepted{6 June 2016}

\raggedbottom

\begin{document}

\title{\;Isoperimetric Problem and Weierstrass Necessary Condition for Fractional Calculus of Variations}

\author[1]{Shakir Sh. Yusubov}

\author[2]{Shikhi Sh. Yusubov}

\author[3,4]{Elimhan N. Mahmudov*}

%\authormark{AUTHOR ONE \textsc{et al}}

\address[1]{\orgdiv{Department of Mechanics and Mathematics}, \orgname{Baku State University}, \orgaddress{\state{Baku}, \country{Azerbaijan}}}

\address[2]{\orgdiv{Department of Mathematics}, \orgname{Shanghai University}, \orgaddress{ \city{Shanghai},  \country{China}}}

\address[3]{\orgname{Azerbaijan National Aviation Academy}, \orgaddress{\state{Baku}, \country{Azerbaijan}}}

\address[4]{ \orgname{Azerbaijan University of Architecture and Construction}, \orgaddress{\state{Baku}, \country{Azerbaijan}}}

\corres{*Elimhan N. Mahmudov, Azerbaijan National Aviation Academy,
Baku, Azerbaijan. \email{elimhan22@yahoo.com}}

%\presentaddress{This is sample for present address text this is sample for present address text}

\abstract[Summary]{In this paper, we study problems of minimization of a functional
depending on the fractional Caputo derivative of order $0<\alpha
\leq 1$ and the fractional Riemann- Liouville integral of order
$\beta > 0$ at fixed endpoints.  A fractional analogue of the Du
Bois-Reymond lemma is proved, and the Euler-Lagrange conditions
are proved for the simplest problem of fractional variational
calculus with fixed ends and for the fractional isoperimetric
problem. An approach is proposed to obtain the necessary first-order conditions
for the strong and weak extrema, and the necessary optimality
conditions are obtained. From these necessary conditions, as a
consequence, we obtain the Weierstrass condition and its local
modification.

It should be noted that some papers in the literature claim that
the standard proof of the Legendre condition in the classical case
$\alpha=1$ cannot be adapted to the fractional case $0< \alpha <1$
with final constraints. Despite this, we prove the Legendre
conditions by the standard classical method via the Weierstrass
condition. In addition, the necessary Weierstrass-Erdmann
conditions at the corner points are obtained. Examples are
provided to illustrate the significance of the main results obtained.}

\keywords{Fractional calculus of variations,
Euler-Lagrange equation, Legendre condition, Weierstrass
condition, Caputo derivative.}

%\jnlcitation{\cname{%
%\author{Williams K.},
%\author{B. Hoskins},
%\author{R. Lee},
%\author{G. Masato}, and
%\author{T. Woollings}} (\cyear{2016}),
%\ctitle{A regime analysis of Atlantic winter jet variability applied to evaluate HadGEM3-GC2}, \cjournal{Q.J.R. Meteorol. Soc.}, \cvol{2017;00:1--6}.}

\maketitle

%\footnotetext{\textbf{Abbreviations:} ANA, anti-nuclear antibodies; APC, antigen-presenting cells; IRF, interferon regulatory factor}

\section{Introduction}\label{sec1}
It is known that the main problem of the calculus of variations
arose as a generalization of the brachistochrone problem posed by
I. Bernoulli in 1696. This problem contained typical features of a
new class of mathematical problems. In 1742, Euler, approximating
curves with broken lines, first derived the necessary conditions
in the form of a second-order differential equation that the
extremals had to satisfy. Lagrange later called this equation the
Euler equation. Lagrange himself derived this equation in 1755 by
varying the curve that was supposed to be the extremum. In 1786
and 1837, the necessary conditions of Legendre and Jacobi were
published successively, and in 1879, during his lectures on the
calculus of variations, Weierstrass introduced a fourth necessary
condition. It should be noted that the method of variations
proposed by J.L. Lagrange is the main method of theoretical study
of extremal problems in functional spaces. In this method, the
type of variation used plays a decisive role, since both the
simplicity and the strength of the resulting necessary condition
largely depend on it (see, e.g. \cite{13,17,27}).
Various problems in mechanics and physics played an important role
in the development of the calculus of variations. In turn, the
methods of the calculus of variations are widely used in various
areas of physics, ranging from classical mechanics to the theory
of elementary particles.

Despite the fact that the study of fractional calculus with
derivatives and integrals of noninteger order began more than
three centuries ago, its intensive development began at the end of
the twentieth century \cite{18,19,25,26}. This is primarily due to the
fact that fractional calculus has found application in various
fields of science, technology, as well as in pure and applied
mathematics \cite{3,21,28,35}. Note that fractional differential
equations produce models that are much superior to models using
integer differential equations because they incorporate memory problems into the model.

One of the most remarkable applications of fractional calculus in
physics is found in the context of classical mechanics. In 1996,
Riewe \cite{24} generalized the usual calculus of variations to
Lagrangians depending on fractional derivatives to take into
account the influence of nonconservative forces. There, for the
first time in the literature, the fractional Euler-Lagrange
equation was formulated.
After this work, numerous studies appeared that contain new results
on fractional calculus of variations. As a rule, the works available in the literature are mainly
devoted to the derivation of fractional first-order conditions of Euler-Lagrange type \cite{1,2,6,8,12}.

Note that over the last decade, studies have emerged that focus on various existence  results \cite{27Q,1M,3M,27M,5M,41M,30M,20M} and necessary optimization conditions for both fractional variational and fractional optimal problems \cite{8,31M,14,29,30,31,32,33,34,15M,20MM}. The paper \cite{1M} investigates the existence of solutions to nonlinear fractional differential inclusions \cite{36E} in the sense of Atangana-Baleanu-Caputo fractional derivatives in a Banach space. The study of the main results is based on the set-valued issue of special fixed point theorem combined with Kuratowski’s non-compactness measure. The article \cite{3M}  studies some new nonlinear boundary value problems of Liouville-Caputo type fractional differential equations supplemented with nonlocal multi-point conditions involving lower order fractional derivative. Some well-known fixed-point theory tools are used to establish the existence of solutions to the problems under consideration. In the paper \cite{5M} a new formula for the fractional derivative with the Mittag-Leffler kernel in the form of a series of Riemann-Liouville fractional integrals is established. The existence and uniqueness of results for some families of linear and nonlinear fractional order ordinary differential equations are also proved. The paper \cite{41M} is devoted to the existence and uniqueness of solution for a large class of perturbed sweeping processes formulated by fractional differential inclusions in infinite dimensional setting. The work \cite{30M} investigates a higher-order numerical technique for solving an inhomogeneous time fractional reaction-advection-diffusion equation with a nonlocal condition. In the article \cite{20M} a new concept of the derivative with respect to an arbitrary kernel function is proposed. The proposed concept includes fractional Riemann-Liouville derivatives as special cases. Finally, the existence of a result for a boundary value problem including the introduced derivative operator is proved. 
The aim of the paper \cite{31M} is to study the optimal attainability problem in terms of the fractional $\phi$-Hilfer derivative for a fractional dynamic system. The Euler–Lagrange equations are used to obtain the necessary optimality constraints for the fractional optimal attainability problem. In \cite{15M}, the solvability and optimal control of the impulsive nonlinear fractional evolution Hilfer inclusion with delay in Banach spaces are studied. Using the concept of Clarke subdifferential with delay, the existence of an optimal pair of controls for the Lagrange problem is proved. The article \cite{20MM} studies fractional systems with Riemann–Liouville derivatives. A theorem on the existence and uniqueness of a solution to the fractional ordinary Cauchy problem is presented. Then, Pontryagin's maximum principle is proved for nonlinear fractional control systems with a nonlinear integral performance index. In the paper \cite{27Q} an attempt is made to find a mild solution for a delay control system described by nonlinear fractional evolutionary differential equations in Banach spaces, while being subject to nonlocal conditions. Then sufficient conditions for approximate controllability of a nonlinear fractional control system are investigated. In the study \cite{34} the optimal control of nonlinear systems of fractional order with control delay is considered, where fractional derivatives are expressed in the Caputo sense. For this problem, a necessary condition of first-order optimality is first obtained in the form of Pontryagin's maximum principle. For control that is singular in the sense of Pontryagin's maximum principle, a necessary condition of higher-order optimality is obtained.  
However, much remains to be
done to establish necessary and sufficient conditions for
optimizing fractional variational problems, as well as to study
the existence and regularity of their solutions.  

 It should
be noted that in the early years of studying the fractional calculus
of variations, the integral functional was considered similarly to
the classical calculus of variations. In article \cite{12}, it is noted
that to ensure the existence of solutions in fractional
variational problems, it is necessary to take into account the
Riemann-Liouville fractional integral. Besides, here the integral
functional is considered in the form of a fractional
Riemann-Liouville integral of order $0< \alpha \leq 1$, and the
Euler-Lagrange equation is obtained in integral and differential
form.

It is known that for the simplest problem of the classical
calculus of variations, the Legendre condition usually can be proven in
two ways. The first method is based on the second variation of the
functional, and the second method uses the Weierstrass function.
In the first method, the standard proof of the Legendre condition
in the classical case is based on the existence of a nontrivial
variation $h$ which, together with its ordinary derivative, has a
compact support. However, according to the well-known nonlocal
property of the Caputo fractional derivative of order $0<\alpha <
1$, a nontrivial variation cannot have a compact support together
with its Caputo fractional derivative. This property of the
fractional derivative was referred to by the authors in work \cite{8}
as an obstacle to proving the Legendre condition in the presence
of boundary constraints.
A more general problem is considered in \cite{8}: the problem of
minimizing a general Bolza functional that depends on the Mayer
cost and the fractional Lagrange cost of order $\beta > 0$,
including the fractional Caputo derivative of order $0 < \alpha
\leq 1$, under general mixed initial/final constraints.
In the work \cite{8}, an entire section (Section 3.3) is devoted to
justifying the obstacles in deriving the Legendre condition in a
fractional formulation with boundary constraints.
To overcome this difficulty, they proposed an alternative method
to the classical proof of the Legendre condition. They adapted the
well-known proof of the Pontryagin maximum principle \cite{7}, based on
Ekeland's variational principle \cite{11}, to the problem of fractional
variational calculus with general mixed initial and terminal
constraints. The Euler-Lagrange equation, transversality
conditions, and the Legendre condition were obtained. In their
doctoral dissertation \cite{9}, L. Bourdin argues that a more natural
formulation of the fractional variational problem, which avoids
the pitfalls of studying fractional variational calculus, lies in
choosing $\beta=\alpha$.

After becoming acquainted with these works, we felt the need for a
deeper investigation of similar problems. However, so far we have
not been able to find in the literature an analogue of the proof
of the Legendre condition for the fixed endpoints problem in the
fractional variational case, corresponding to the proof in
classical variational calculus. Moreover, at present, there is no
universally accepted formulation of the Weierstrass condition for
fractional variational problems, and this issue remains open. Our
goal is to fill this gap.

In the present paper, we consider a fractional variational problem
for which the functional is written as a fractional
Riemann-Liouville integral of order $\beta > 0$ and depends on the
fractional Caputo derivative of order $0 < \alpha \leq 1$. We are
looking for solutions to the stated fractional variational problem
in the class of functions for which the functions are continuous
and the fractional Caputo derivatives of order $0< \alpha \leq 1$
are piecewice continuous on a given interval. It should be noted
that the functional and the space of functions are a direct
generalization of the classical case when $\beta=\alpha=1$. As in
the classical case, we first prove a lemma that is a direct
generalization of the classical Du Bois-Reymond lemma. It is shown
here that the relationship between the parameters $\beta$ and
$\alpha$ must be taken into account. This lemma for
$\beta=\alpha=1$ agrees with then classical Du Bois-Reymond lemma,
and for $0 < \beta=\alpha \leq 1$ agrees with the lemma obtained
in article \cite{12}. Using this lemma, the necessary Euler-Lagrange
condition in integral form for a weak local minimum is proven for
the considered problem, where, unlike the Euler-Lagrange condition
obtained in \cite{8}, the relationship between the parameters $\beta$
and $\alpha$ is considered directly.
Using the obtained Euler-Lagrange condition, an analogue of the
first Weierstrass-Erdmann condition for a weak minimum at the
corner point is obtained.
Next, the necessary Euler-Lagrange condition is derived in
integral form for the fractional variational isoperimetric
problem.

It is well known that in the classical calculus of variations the
type of variation used plays a decisive role. Similarly, in the
fractional calculus of variations the type of variation also plays
a significant role. By introducing a special variation depending
on the parameters $a, \ b$ and $k$, we prove necessary first-order
conditions for strong and weak local minima in the case of
nonsmooth Lagrangians with respect to the argument
$^{c}D_{t_{0}+}^{\alpha}x$. Note that from these necessary
conditions, an analogue of the Weierstrass necessary condition and
its local modification follows as a consequence.

Our proof represents a nontrivial adaptation of standard methods
of classical variational calculus to the case of fractional
derivatives. At the same time, it is necessary to overcome a
number of technical difficulties associated with the non-locality
of fractional operators.
From the Weierstrass condition at the corner point we obtain an
analogue of the second Weierstrass-Erdmann  condition.
In addition, we have shown that, as in the classical calculus of
variations, for the simplest problem of fractional calculus of
variations, the Legendre conditions can be obtained from the
Weierstrass condition. A separate article is devoted to the proof
of the Legendre condition using the second variation of the
functional.

Thus, the main novetly of this work is  one of the possible
approaches to proving necessary conditions of first-and
second-order optimality in a fractional-variational problem. The
necessary Euler-Lagrange condition in integral form, the necessary
condition for the isoperimetric problem, the necessary first-order
conditions for strong and weak local minima in the case of
non-smooth Lagrangians in the argument $^{c}D_{t_{0}+}^{\alpha}x$,
as well as the Weierstrass and Legendre  conditions are obtained.

The rest of the paper is organized as follows.

In Section \ref{sec2}, the
definitions and basic properties of fractional order integrals and
derivatives are recalled, and also some preliminary results.
In Section \ref{sec3}, the fundamental Du Bois-Reymond lemma for fractional
variational calculus is proved.
In Section \ref{sec4}, the Euler-Lagrange and Weierstrass-Erdmann
conditions for weak minima are proved.
In Section \ref{sec5}, the  necessary Euler-Lagrange condition for the
fractional variational isoperimetric problem is derived.
In Section \ref{sec6}, a necessary first-order condition for a strong and
weak local minimum in the fractional problem is derived, and then,
as a consequence, the Weierstrass, Legendre, and
Weierstrass-Erdmann conditions are obtained.

\section{Notations  and basics from fractional calculus}\label{sec2}
This section is devoted to recall basic definitions and results
about Riemann-Liouville and Caputo fractional operators. All of
the presentations below is standard and mostly extracted from the
monographs \cite{19,26}.

Let $\mathbb{R}^{n}$  be the space of $n$- dimensional vectors and
$\|\cdot\|$ be the norm in this space. Let the numbers $t_{0}, \
t_{1} \in \mathbb{R}$, $t_{0}<t_{1}$ be fixed.

Let $C([t_{0}, t_{1}], \mathbb{R}^{n})$ be the space of continuous
functions on $[t_{0}, t_{1}]$ with values in $\mathbb{R}^{n}$,
endowed with the uniform norm $\|\cdot\|_{C}.$  $L^{1}([t_{0},
t_{1}], \mathbb{R}^{n})$ is the Lebesgue space of summable functions
defined on $[t_{0}, t_{1}]$ with values in $\mathbb{R}^{n}$,
endowd with its usual norm $\|\cdot\|_{L^{1}}.$
$L^{\infty}([t_{0}, t_{1}], \mathbb{R}^{n})$ is the Lebesgue space of
essentially bounded functions defined on $[t_{0}, t_{1}]$ with
values in $\mathbb{R}^{n}$, endowed with its usual norm
$\|\cdot\|_{L^{\infty}}.$

\begin{definition}\label{def21} Let $\alpha >0.$ For a function $\varphi
\in L^{1}([t_{0}, t_{1}], \mathbb{R}^{n})$ the left-sided and
right-sided Riemann-Liouville fractional integrals of the order
$\alpha$ are defined for a.e. $t \in [t_{0}, t_{1}]$ by
$$
(I_{t_{0}+}^{\alpha}\varphi)(t)=\frac{1}{\Gamma(\alpha)}\int\limits_{t_{0}}^{t}(t-\tau)^{\alpha-1}\varphi(\tau)d\tau\,\,\,
\texttt{and} \,\,\,
(I_{t_{1}-}^{\alpha}\varphi)(t)=\frac{1}{\Gamma(\alpha)}\int\limits_{t}^{t_{1}}(\tau-t)^{\alpha-1}\varphi(\tau)d\tau,
$$
respectively. For $\alpha=0$ we set
$I^{0}_{t_{0}+}\varphi=I_{t_{1}-}^{0}\varphi=\varphi$.

If $\varphi \in L^{\infty}([t_{0}, t_{1}], \mathbb{R}^{n}),$ then
the above functions are defined and finite everywhere on $[t_{0},
t_{1}]$.
\end{definition}
\begin{definition}\label{def22} Let $\alpha \in (0, \ 1)$ and $\varphi
\in L^{1}([t_{0}, t_{1}], \mathbb{R}^{n}).$ For a function
$\varphi$ the left-sided and right-sided Riemann-Liouville
fractional derivatives of the order $\alpha$ are defined for a.e.
$t \in [t_{0}, t_{1}]$ by
$$
(D_{t_{0}+}^{\alpha}\varphi)(t)=\frac{d}{dt}(I_{t_{0}+}^{1-\alpha}\varphi)(t)=
\frac{1}{\Gamma(1-\alpha)}\frac{d}{dt}
\int\limits_{t_{0}}^{t}(t-\tau)^{-\alpha}\varphi(\tau)d\tau\,
$$
and
$$
(D_{t_{1}-}^{\alpha}\varphi)(t)=-\frac{d}{dt}(I_{t_{1}-}^{1-\alpha}\varphi)(t)=
-\frac{1}{\Gamma(1-\alpha)}\frac{d}{dt}
\int\limits_{t}^{t_{1}}(\tau-t)^{-\alpha}\varphi(\tau)d\tau,
$$
if $(I_{t_{0}+}^{1-\alpha}\varphi)(\cdot)$ and
$(I_{t_{1}-}^{1-\alpha}\varphi)(\cdot)$ are absolutely continuous
functions on $[t_{0}, t_{1}]$.
\end{definition}
\begin{definition}\label{def23} Let $\alpha \in (0, \ 1),$ and $\varphi
\in C([t_{0}, t_{1}], \mathbb{R}^{n}).$ For a function $\varphi$
the left-sided and right-sided Caputo  fractional derivatives of
the order $\alpha$ are defined for a.e. $t \in [t_{0}, t_{1}]$ by
$$
(^{c}D_{t_{0}+}^{\alpha}\varphi)(t)=\frac{d}{dt}(I_{t_{0}+}^{1-\alpha}(\varphi(\cdot)-\varphi(t_{0})))(t)=
\frac{1}{\Gamma(1-\alpha)}\frac{d}{dt}
\int\limits_{t_{0}}^{t}(t-\tau)^{-\alpha}(\varphi(\tau)-\varphi(t_{0}))d\tau\,
$$
and
$$
(^{c}D_{t_{1}-}^{\alpha}\varphi)(t)=-\frac{d}{dt}(I_{t_{1}-}^{1-\alpha}(\varphi(\cdot)-\varphi(t_{1})))(t)=
-\frac{1}{\Gamma(1-\alpha)}\frac{d}{dt}
\int\limits_{t}^{t_{1}}(\tau-t)^{-\alpha}(\varphi(\tau)-\varphi(t_{1}))d\tau,
$$
if $(I_{t_{0}+}^{1-\alpha}(\varphi(\cdot)-\varphi(t_{0})))(\cdot)$
and
$(I_{t_{1}-}^{1-\alpha}(\varphi(\cdot)-\varphi(t_{1})))(\cdot)$
are absolutely continuous functions on $[t_{0}, t_{1}]$.

By $PC([t_{0}, t_{1}], \mathbb{R}^{n})$ we denote the set of
piecewise continuous functions that are continuous everywhere on
$[t_{0}, \ t_{1}]$, except for a finite number of points
$\sigma_{i}\in (t_{0}, t_{1}),$ $i=\overline{1, \ k},$ and hawing first kind of discontinuities at these points.

In what follows, the set of continuity points of a piecewise
continuous function will be denoted by $T$, and the set of points
of discontinuity of the first kind by A.

By $PC^{\alpha}([t_{0}, t_{1}], \mathbb{R}^{n}),$ $0<\alpha \leq
1,$ we denote the set of all functions $\varphi \in C([t_{0},
t_{1}], \mathbb{R}^{n}),$ such that
$$
\varphi(t)=\varphi(t_{0})+(I_{t_{0}+}^{\alpha}\psi)(t), \,\,\, t
\in [t_{0}, t_{1}],
$$
with $\psi \in PC([t_{0}, t_{1}], \mathbb{R}^{n}).$
\end{definition}
\begin{proposition}\label{pro21} For any $\varphi \in PC^{\alpha}([t_{0},
t_{1}], \mathbb{R}^{n}),$ $0<\alpha \leq 1,$ the value
$(^{c}D_{t_{0}+}^{\alpha}\varphi)(t)$ is correctly defined for
every $t \in T.$ Moreover, the inclusion
$(^{c}D_{t_{0}+}^{\alpha}\varphi)\in PC([t_{0}, t_{1}],
\mathbb{R}^{n})$ holds (i.e., there exists $\psi \in PC([t_{0},
t_{1}], \mathbb{R}^{n})$ such that
$\psi(t)=(^{c}D_{t_{0}}^{\alpha}\varphi)(t)$ for every $t \in T$)
and
$$
(I_{t_{0}+}^{\alpha}(^{c}D_{t_{0}+}^{\alpha}\varphi))(t)=\varphi(t)-\varphi(t_{0}),
\,\,\, t \in [t_{0}, t_{1}].
$$
\end{proposition}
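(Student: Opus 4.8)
The plan is to read off everything from the defining representation of $PC^{\alpha}$. By hypothesis there is $\psi \in PC([t_{0}, t_{1}], \mathbb{R}^{n})$ with $\varphi(t) = \varphi(t_{0}) + (I_{t_{0}+}^{\alpha}\psi)(t)$ on $[t_{0}, t_{1}]$, and I will prove the single identity $(^{c}D_{t_{0}+}^{\alpha}\varphi)(t) = \psi(t)$ for every $t \in T$. Once this is in hand, all three assertions follow immediately: the left-hand side is defined at every $t \in T$, it is represented there by the piecewise continuous function $\psi$ (so $(^{c}D_{t_{0}+}^{\alpha}\varphi) \in PC$), and applying $I_{t_{0}+}^{\alpha}$ gives $(I_{t_{0}+}^{\alpha}(^{c}D_{t_{0}+}^{\alpha}\varphi))(t) = (I_{t_{0}+}^{\alpha}\psi)(t) = \varphi(t) - \varphi(t_{0})$.

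First I would rewrite $\varphi - \varphi(t_{0}) = I_{t_{0}+}^{\alpha}\psi$ and apply $I_{t_{0}+}^{1-\alpha}$ to both sides, using the semigroup law $I_{t_{0}+}^{1-\alpha}I_{t_{0}+}^{\alpha}\psi = I_{t_{0}+}^{1}\psi$ (valid for $\psi \in L^{1}$, in particular for our $\psi$, since $\psi$ is bounded and measurable). This yields
\[
(I_{t_{0}+}^{1-\alpha}(\varphi(\cdot) - \varphi(t_{0})))(t) = (I_{t_{0}+}^{1}\psi)(t) = \int_{t_{0}}^{t}\psi(\tau)\,d\tau, \qquad t \in [t_{0}, t_{1}].
\]
The right-hand side is the primitive of a bounded measurable function, hence absolutely continuous on $[t_{0}, t_{1}]$, which is exactly the regularity demanded in Definition~\ref{def23} for the Caputo derivative to be meaningful. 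Differentiating, the fundamental theorem of calculus for an integrand continuous at the point in question gives $\frac{d}{dt}\int_{t_{0}}^{t}\psi(\tau)\,d\tau = \psi(t)$ at every point where $\psi$ is continuous, i.e.\ at every $t \in T$ (at the finitely many points of $A$ only one-sided derivatives exist and they differ, which is why those points are excluded). Hence $(^{c}D_{t_{0}+}^{\alpha}\varphi)(t) = \psi(t)$ for all $t \in T$, completing the argument.

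The step requiring the most care is the semigroup identity $I_{t_{0}+}^{1-\alpha}I_{t_{0}+}^{\alpha}\psi = I_{t_{0}+}^{1}\psi$ at the level of the relevant function classes: one must justify the interchange of the order of integration, which amounts to the Beta-integral evaluation $\int_{\tau}^{t}(t-s)^{-\alpha}(s-\tau)^{\alpha-1}\,ds = \Gamma(\alpha)\Gamma(1-\alpha)$ together with Fubini's theorem; this is legitimate because $\psi \in L^{\infty} \subset L^{1}$ and the composite kernel is integrable, and it is the composition property recorded in \cite{19,26}. A secondary, more routine point is to note that the pointwise values of the representative $\psi$ on the finite exceptional set $A$ are immaterial, so the witness $\psi$ establishing $(^{c}D_{t_{0}+}^{\alpha}\varphi) \in PC([t_{0}, t_{1}], \mathbb{R}^{n})$ is unambiguous even if the representation of $\varphi$ is not unique.
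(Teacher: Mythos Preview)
The paper does not actually prove Proposition~\ref{pro21}: it appears in Section~\ref{sec2}, which the authors introduce by saying that ``all of the presentations below is standard and mostly extracted from the monographs \cite{19,26},'' and no argument is supplied. So there is no paper proof to compare against.

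That said, your argument is correct and is the natural one. The key computation $I_{t_{0}+}^{1-\alpha}(\varphi-\varphi(t_{0}))=I_{t_{0}+}^{1-\alpha}I_{t_{0}+}^{\alpha}\psi=I_{t_{0}+}^{1}\psi$ via the semigroup property, followed by differentiation at continuity points of $\psi$, is exactly how this is done in the standard references. One minor remark: in the final step where you conclude $I_{t_{0}+}^{\alpha}(^{c}D_{t_{0}+}^{\alpha}\varphi)=I_{t_{0}+}^{\alpha}\psi$, you are implicitly using that $(^{c}D_{t_{0}+}^{\alpha}\varphi)$ and $\psi$ agree outside the finite set $A$, hence almost everywhere, so their Riemann--Liouville integrals coincide; you gesture at this in your last paragraph, and it would be worth making the sentence explicit in the main argument rather than only as an afterthought.
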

\begin{proposition}\label{pro22} Let $\alpha \in (0, \ 1)$ and $\varphi
\in PC([t_{0}, t_{1}], \mathbb{R}^{n}),$ then
$(^{c}D_{t_{0}+}^{\alpha}(I_{t_{0}+}^{\alpha}\varphi))(t)=\varphi(t)$
and
$(^{c}D_{t_{1}-}^{\alpha}(I_{t_{1}-}^{\alpha}\varphi))(t)=\varphi(t)$
for every $t \in T.$

We also define the set $PC_{0}^{\alpha}([t_{0}, t_{1}],
\mathbb{R}^{n})=\{\varphi| \varphi \in PC^{\alpha}([t_{0}, t_{1}],
\mathbb{R}^{n}), \ \varphi(t_{0})=\varphi(t_{1})=0\}.$

By $\langle u, \ v \rangle$ we denote the scalar product of
vectors $u$ and $v$.
\end{proposition}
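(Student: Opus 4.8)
The plan is to reduce both identities to the composition (semigroup) law for Riemann--Liouville integrals together with the elementary fact that the primitive of a piecewise continuous function is differentiable, with derivative equal to the integrand, at each of its points of continuity.

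First, for the left-sided identity I would put $\psi:=I_{t_{0}+}^{\alpha}\varphi$. Since $\varphi\in PC([t_{0},t_{1}],\mathbb{R}^{n})$ is bounded, the estimate $\|\psi(t)\|\le \frac{1}{\Gamma(\alpha)\,\alpha}(t-t_{0})^{\alpha}\|\varphi\|_{L^{\infty}}$ shows that $\psi(t_{0})=0$; hence the correction term in the Caputo derivative of Definition~\ref{def23} is inert and
$$
(^{c}D_{t_{0}+}^{\alpha}(I_{t_{0}+}^{\alpha}\varphi))(t)=\frac{d}{dt}(I_{t_{0}+}^{1-\alpha}(I_{t_{0}+}^{\alpha}\varphi))(t).
$$
Next I would invoke the semigroup property $I_{t_{0}+}^{1-\alpha}(I_{t_{0}+}^{\alpha}\varphi)=I_{t_{0}+}^{1}\varphi$, valid for $L^{1}$ (in particular bounded) functions \cite{19,26}; its right-hand side is the ordinary primitive $t\mapsto\int_{t_{0}}^{t}\varphi(\tau)\,d\tau$, which is absolutely continuous, so the Caputo derivative above is well defined in the sense of Definition~\ref{def23}. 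Finally, by the fundamental theorem of calculus for piecewise continuous integrands, $\frac{d}{dt}\int_{t_{0}}^{t}\varphi(\tau)\,d\tau=\varphi(t)$ for every $t\in T$, which is the first assertion.

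For the right-sided identity I would argue symmetrically: with $\psi:=I_{t_{1}-}^{\alpha}\varphi$ one again has $\psi(t_{1})=0$, so Definition~\ref{def23} gives
$$
(^{c}D_{t_{1}-}^{\alpha}(I_{t_{1}-}^{\alpha}\varphi))(t)=-\frac{d}{dt}(I_{t_{1}-}^{1-\alpha}(I_{t_{1}-}^{\alpha}\varphi))(t)=-\frac{d}{dt}\int_{t}^{t_{1}}\varphi(\tau)\,d\tau=\varphi(t),\qquad t\in T,
$$
where the middle equality uses the semigroup law $I_{t_{1}-}^{1-\alpha}(I_{t_{1}-}^{\alpha}\varphi)=I_{t_{1}-}^{1}\varphi$ and the last one the fundamental theorem of calculus, the minus sign in the definition of the right Caputo derivative cancelling the minus produced by differentiating $\int_{t}^{t_{1}}$.

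I do not expect a genuine obstacle here: the entire content is the remark that a Riemann--Liouville integral vanishes at its base point (so the Caputo correction term drops out) together with the composition rule $I^{1-\alpha}I^{\alpha}=I^{1}$. The only point that needs attention is that the concluding differentiation step is legitimate precisely at the continuity points $t\in T$ and generally fails at the jump points in $A$, which is exactly why the statement is restricted to $t\in T$; this is the classical fact that the primitive of a piecewise continuous function is differentiable with derivative equal to the integrand away from the jumps.
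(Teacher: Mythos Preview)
Your argument is correct and is exactly the standard route: the Riemann--Liouville integral vanishes at its base point, so the Caputo derivative collapses to the Riemann--Liouville one, and then the semigroup identity $I^{1-\alpha}I^{\alpha}=I^{1}$ together with the fundamental theorem of calculus at continuity points finishes the job. The paper itself does not prove Proposition~\ref{pro22}; it is stated in Section~\ref{sec2} as a standard background fact ``mostly extracted from the monographs \cite{19,26}'', and your proof is precisely the argument one finds in those references.
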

\section{Fundamental lemma of fractional calculus of variations}\label{sec3}

In this section we prove the generalized Du Bois-Reymond lemma,
which plays an essential role in deriving the first-order
necessary condition.
\begin{lemma}\label{lem31} Suppose that $f \in PC([t_{0},
t_{1}],\mathbb{R}), \,$ $0<\alpha \leq 1$ and $\beta > 0.$ Then to
fulfill equality
\begin{equation}\label{equ1}
\int\limits_{t_{0}}^{t_{1}}(t_{1}-t)^{\beta-1}f(t)(^{c}D_{t_{0}+}^{\alpha}h)(t)dt=0,
\,\,\,  h \in PC_{0}^{\alpha}([t_{0}, t_{1}], \mathbb{R})
\end{equation}%
it is necessary and sufficient that

1) $(t_{1}-t)^{\beta-\alpha}f(t)=0$ on $t \in T,$ if $\beta
> \alpha>0,$

2) $f(t)=\frac{k}{\Gamma(\alpha)}(t_{1}-t)^{\alpha-\beta}$ on $t
\in T,$ if $0 < \beta \leq \alpha \leq1$ for some constant $k\neq
0.$
\end{lemma}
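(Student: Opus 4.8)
The plan is to reduce \eqref{equ1} to an ordinary weighted orthogonality relation subject to a single scalar constraint, and then to combine a one‑constraint ("annihilator of a functional") argument with the classical fundamental lemma. First I would pass from the variation $h$ to its Caputo derivative. Given $h \in PC_0^{\alpha}([t_{0},t_{1}],\mathbb{R})$, set $\psi := {}^{c}D_{t_{0}+}^{\alpha}h \in PC([t_{0},t_{1}],\mathbb{R})$; by Proposition \ref{pro21}, since $h(t_{0})=0$ one has $h = I_{t_{0}+}^{\alpha}\psi$, and the remaining endpoint condition $h(t_{1})=0$ is equivalent to $(I_{t_{0}+}^{\alpha}\psi)(t_{1}) = \frac{1}{\Gamma(\alpha)}\int_{t_{0}}^{t_{1}}(t_{1}-\tau)^{\alpha-1}\psi(\tau)\,d\tau = 0$. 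Conversely, by Proposition \ref{pro22}, for every $\psi \in PC([t_{0},t_{1}],\mathbb{R})$ with $\int_{t_{0}}^{t_{1}}(t_{1}-\tau)^{\alpha-1}\psi(\tau)\,d\tau = 0$ the function $h := I_{t_{0}+}^{\alpha}\psi$ lies in $PC_0^{\alpha}$ and satisfies ${}^{c}D_{t_{0}+}^{\alpha}h = \psi$. Hence \eqref{equ1} is equivalent to the assertion that $\int_{t_{0}}^{t_{1}}(t_{1}-t)^{\beta-1}f(t)\psi(t)\,dt = 0$ for every $\psi \in PC([t_{0},t_{1}],\mathbb{R})$ satisfying the single linear constraint $\int_{t_{0}}^{t_{1}}(t_{1}-\tau)^{\alpha-1}\psi(\tau)\,d\tau = 0$.

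Next I would run the standard argument for the annihilator of one linear functional. Fix $\psi_{0}\equiv 1$, for which $\Lambda_{0} := \int_{t_{0}}^{t_{1}}(t_{1}-\tau)^{\alpha-1}\,d\tau = (t_{1}-t_{0})^{\alpha}/\alpha \neq 0$. For an arbitrary $\psi \in PC$, the function $\psi - \Lambda_{0}^{-1}\bigl(\int_{t_{0}}^{t_{1}}(t_{1}-\tau)^{\alpha-1}\psi(\tau)\,d\tau\bigr)\psi_{0}$ satisfies the constraint, so substituting it into the reduced identity yields $\int_{t_{0}}^{t_{1}}(t_{1}-t)^{\beta-1}f(t)\psi(t)\,dt = \mu\int_{t_{0}}^{t_{1}}(t_{1}-\tau)^{\alpha-1}\psi(\tau)\,d\tau$, where $\mu := \Lambda_{0}^{-1}\int_{t_{0}}^{t_{1}}(t_{1}-t)^{\beta-1}f(t)\,dt$ is a constant independent of $\psi$. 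Equivalently, $\int_{t_{0}}^{t_{1}}\bigl[(t_{1}-t)^{\beta-1}f(t) - \mu(t_{1}-t)^{\alpha-1}\bigr]\psi(t)\,dt = 0$ for all $\psi \in PC$. The bracketed function belongs to $L^{1}([t_{0},t_{1}])$ because $f$ is bounded and $\beta-1 > -1$, $\alpha-1 > -1$; testing against continuous $\psi$ and invoking the fundamental lemma of the calculus of variations (for piecewise continuous integrands) gives $(t_{1}-t)^{\beta-1}f(t) = \mu(t_{1}-t)^{\alpha-1}$, i.e. $f(t) = \mu(t_{1}-t)^{\alpha-\beta}$, for every $t \in T$.

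Finally I would split according to the sign of $\alpha-\beta$. If $\beta > \alpha$, then $(t_{1}-t)^{\alpha-\beta}$ is unbounded as $t\uparrow t_{1}$, while $f\in PC$ is bounded, which forces $\mu = 0$; hence $f(t)=0$, equivalently $(t_{1}-t)^{\beta-\alpha}f(t)=0$, on $T$, which is case 1. If $0<\beta\le\alpha\le 1$, then $(t_{1}-t)^{\alpha-\beta}$ is bounded and no contradiction arises; writing $k := \mu\,\Gamma(\alpha)$ gives $f(t) = \frac{k}{\Gamma(\alpha)}(t_{1}-t)^{\alpha-\beta}$ on $T$, which is case 2 (the degenerate value $k=0$, i.e. $f\equiv0$, being admissible as well). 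Sufficiency in both cases is immediate from the reduction, since for such $f$ and any $h\in PC_0^{\alpha}$ one gets $\int_{t_{0}}^{t_{1}}(t_{1}-t)^{\beta-1}f(t)({}^{c}D_{t_{0}+}^{\alpha}h)(t)\,dt = \frac{k}{\Gamma(\alpha)}\int_{t_{0}}^{t_{1}}(t_{1}-t)^{\alpha-1}({}^{c}D_{t_{0}+}^{\alpha}h)(t)\,dt = k\,(h(t_{1})-h(t_{0})) = 0$. I expect the main obstacle to be the first step: making the correspondence $h\leftrightarrow\psi$ rigorous — in particular, verifying that the set of admissible directions $\psi$ is exactly the hyperplane $\{\psi\in PC:\int_{t_{0}}^{t_{1}}(t_{1}-\tau)^{\alpha-1}\psi(\tau)\,d\tau=0\}$, which rests entirely on Propositions \ref{pro21}–\ref{pro22} — together with the bookkeeping needed to control the endpoint weights $(t_{1}-t)^{\beta-1}$ and $(t_{1}-t)^{\alpha-1}$ so that all integrals converge, the fundamental lemma applies, and the pointwise identity genuinely holds on all of $T$.
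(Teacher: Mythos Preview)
Your argument is correct, but it follows a genuinely different route from the paper's proof. The paper treats the two cases $\beta>\alpha$ and $\beta\le\alpha$ separately from the outset and, in each case, constructs a \emph{single} explicit test function $h$ whose Caputo derivative is (up to a constant) essentially $(t_{1}-t)^{\beta-\alpha}f(t)-k_{0}$ or $f(t)-\tfrac{k}{\Gamma(\alpha)}(t_{1}-t)^{\alpha-\beta}$; substituting this $h$ back into \eqref{equ1} and exploiting that the resulting integrand is a perfect square forces the bracket to vanish on $T$. This is a direct fractional analogue of the classical Du Bois--Reymond trick of testing the integrand against itself. By contrast, you first pass to $\psi={}^{c}D_{t_{0}+}^{\alpha}h$ and recognise \eqref{equ1} as the statement that the functional $\psi\mapsto\int(t_{1}-t)^{\beta-1}f\,\psi$ annihilates the hyperplane $\int(t_{1}-t)^{\alpha-1}\psi=0$; the standard ``annihilator of one functional'' argument then yields the pointwise identity $(t_{1}-t)^{\beta-1}f(t)=\mu(t_{1}-t)^{\alpha-1}$ in one stroke, and the case split enters only at the very end through the boundedness of $f$. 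Your approach is more functional-analytic and unifies the two regimes until the last step; the paper's is more constructive and self-contained (no appeal to an auxiliary fundamental lemma for $L^{1}$ integrands is needed, since positivity of the square does the work). Your remark that $k=0$ should be admissible in case 2) is also well taken; the paper's ``$k\neq0$'' is an inessential artefact of how the constant is introduced in its proof.
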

\begin{proof} \textbf{Sufficiency.} The equality (\ref{equ1}) follows immediately from $f(t)=0,$ $t \in T$. Let now conditions 2) be satisfied. Then
$$
\int\limits_{t_{0}}^{t_{1}}(t_{1}-t)^{\beta-1}f(t)(^{c}D_{t_{0}+}^{\alpha}h)(t)dt=
\frac{k}{\Gamma(\alpha)}\int\limits_{t_{0}}^{t_{1}}(t_{1}-t)^{\beta-1}(t_{1}-t)^{\alpha-\beta}(^{c}D_{t_{0}+}^{\alpha}h)(t)dt
$$
$$
=\frac{k}{\Gamma(\alpha)}\int\limits_{t_{0}}^{t_{1}}(t_{1}-t)^{\alpha-1}(^{c}D_{t_{0}+}^{\alpha}h)(t)dt=k(h(t_{1})-h(t_{0}))=0.
$$

\textbf{Necessity.} Let $f \in PC([t_{0}, \ t_{1}], \mathbb{R}),$ $0
< \alpha \leq 1,$ $\beta>0$ and equality (\ref{equ1}) is satisfied. First
consider the case $\beta > \alpha$. Let's put
$$
h(t)=\frac{1}{\Gamma(\alpha)}\int\limits_{t_{0}}^{t}(t-\tau)^{\alpha-1}
[\Gamma(\alpha)(t_{1}-\tau)^{\beta-\alpha}f(\tau)-k_{0}]d\tau,
\,\, t \in [t_{0}, \ t_{1}],
$$
where $k_{0}$ is a real number.

The fractional Caputo derivative of the function $h$ has the form
$$
(^{c}D_{t_{0}+}^{\alpha}h)(t)=\Gamma(\alpha)(t_{1}-t)^{\beta-\alpha}f(t)-k_{0},
\,\, t \in T.
$$

Therefore, the function $h$ belongs to the set $PC^{\alpha}([t_{0},
\ t_{1}], \mathbb{R})$ and $h(t_{0})=0.$ If you choose $k_{0}$ in
the form:
$$
k_{0}=\frac{\Gamma(\alpha+1)}{(t_{1}-t_{0})^{\alpha}}\int\limits_{t_{0}}^{t_{1}}(t_{1}-t)^{\beta-1}f(t)dt,
$$
then it is obvious that
$$
h(t_{1})=\frac{1}{\Gamma(\alpha)}\int\limits_{t_{0}}^{t_{1}}(t_{1}-\tau)^{\alpha-1}
[\Gamma(\alpha)(t_{1}-\tau)^{\beta-\alpha}f(\tau)-k_{0}]d\tau=0.
$$

Thus, for a given $k_{0}$, the function $h$ belongs to the set
$PC_{0}^{\alpha}([t_{0},t_{1}], \mathbb{R}).$ For the chosen $h$ we
write equality (\ref{equ1}) in the form
$$
\frac{1}{\Gamma(\alpha)}\int\limits_{t_{0}}^{t_{1}}(t_{1}-t)^{\alpha-1}
\Gamma(\alpha)(t_{1}-t)^{\beta-\alpha}f(t)[\Gamma(\alpha)(t_{1}-t)^{\beta-\alpha}f(t)
-k_{0}]dt=0.
$$

On the other hand
$$
\frac{k_{0}}{\Gamma(\alpha)}\int\limits_{t_{0}}^{t_{1}}(t_{1}-t)^{\alpha-1}
[\Gamma(\alpha)(t_{1}-t)^{\beta-\alpha}f(t) -k_{0}]dt=0.
$$

From the last two equalities we have
$$
\frac{1}{\Gamma(\alpha)}\int\limits_{t_{0}}^{t_{1}}(t_{1}-t)^{\alpha-1}
[\Gamma(\alpha)(t_{1}-t)^{\beta-\alpha}f(t) -k_{0}]^{2}dt=0.
$$

It follows that
$$
\Gamma(\alpha)(t_{1}-t)^{\beta-\alpha}f(t)=k_{0}, \,\,\, t \in T.
$$

Taking into account the condition $\beta> \alpha$, from this
equality for $t=t_{1}$ we obtain that $k_{0}=0$. Then for any $t
\in T$ we have $(t_{1}-t)^{\beta-\alpha}f(t)=0.$

Now consider the case $0 < \beta \leq \alpha \leq 1.$ For $h \in
PC_{0}^{\alpha}([t_{0},{t_1}], \mathbb{R})$ we have $h(t_{0})=0$ and
\begin{equation}\label{equ2}
h(t_{1})=\frac{1}{\Gamma(\alpha)}\int\limits_{t_{0}}^{t_{1}}(t_{1}-t)^{\alpha-1}
(^{c}D^{\alpha}_{t_{0}+}h)(t)dt=0.
\end{equation}%

Multiplying equality (\ref{equ2}) by $0 \neq k \in \mathbb{R}$ and
subtracting from equality (\ref{equ1}), we get
\begin{equation}\label{equ3}
\int\limits_{t_{0}}^{t_{1}}(t_{1}-t)^{\beta-1}\left[f(t)-\frac{k}{\Gamma(\alpha)}(t_{1}-t)^{\alpha-\beta}
\right](^{c}D^{\alpha}_{t_{0}+}h)(t)dt=0.
\end{equation}%

Now we choose a constant number $k$ so that equation
\begin{equation}\label{equ4}
(^{c}D^{\alpha}_{t_{0}+}h)(t)=f(t)-\frac{k}{\Gamma(\alpha)}(t_{1}-t)^{\alpha-\beta},
\,\, t \in T,
\end{equation}%
in the space $PC_{0}^{\alpha}([t_{0}, t_{1}], \mathbb{R})$ has a
unique solution.

Obviously, the solution to equation (\ref{equ4}), satisfying the condition
$h(t_{0})=0$, has the form
$$
h(t)=\frac{1}{\Gamma(\alpha)}\int\limits_{t_{0}}^{t}(t-\tau)^{\alpha-1}\left(f(\tau)-\frac{k}{\Gamma(\alpha)}(t_{1}-\tau)^{\alpha-\beta}
\right)d\tau, \,\,\, t \in [t_{0}, \ t_{1}].
$$

From the conditions
$$
h(t_{1})=\frac{1}{\Gamma(\alpha)}\int\limits_{t_{0}}^{t_{1}}(t_{1}-\tau)^{\alpha-1}\left(f(\tau)-\frac{k}{\Gamma(\alpha)}(t_{1}-\tau)^{\alpha-\beta}
\right)d\tau=0,
$$
we find
$$
k=\frac{(2\alpha-\beta)\Gamma(\alpha)}{(t_{1}-t_{0})^{2\alpha-\beta}}\int\limits_{t_{0}}^{t_{1}}(t_{1}-t)^{\alpha-1}f(t)dt.
$$

Taking into account (\ref{equ4}) of (\ref{equ3}) we have
$$
\int\limits_{t_{0}}^{t_{1}}(t_{1}-t)^{\beta-1}\left[f(t)-\frac{k}{\Gamma(\alpha)}(t_{1}-t)^{\alpha-\beta}
\right]^{2}dt=0.
$$
If follows that  $ \;f(t)=\frac{k}{\Gamma(\alpha)}(t_{1}-t)^{\alpha-\beta}, \,\,\, t
\in T,$ if\, $ 0< \beta \leq \alpha \leq 1. 
$ \end{proof}

\section{The Euler-Lagrange necessary condition for the simplest
problem of fractional variational calculus}\label{sec4}

\bigskip
 For $0<\alpha \leq 1$ and $\beta>0$ we call the simplest problem
 of fractional calculus of variations the following extremal
 problem in the space $PC^{\alpha}([t_{0}, t_{1}], \mathbb{R}^{n})$:
$$
J(x(\cdot))=\int\limits_{t_{0}}^{t_{1}}(t_{1}-t)^{\beta-1}L(t,
x(t), (^{c}D_{t_{0}+}^{\alpha}x)(t))dt \rightarrow extr, \\
x(t_{0})=x_{0}, \,\,\, x(t_{1})=x_{1}. \eqno(P)
$$

Here the segment $[t_{0}, t_{1}]$ is assumed to be fixed and finite,
$t_{0}<t_{1}.$ $L=L(t, x, y)$ is a function $2n+1$ variables, called
the integrand. The extremum in problem (P) is considered among the
space functions $PC^{\alpha}([t_{0}, t_{1}],\mathbb{R}^{n})$
satisfying the conditions at the ends or boundary conditions:
$x(t_{0})=x_{0}$, $x(t_{1})=x_{1}.$ Such functions are called
admissible.

\begin{definition}\label{def41} We will say that an admissible function
$x^{0}$ deliver a weak local minimum in problem (P), and write
$x^{0} \in wlocmin P$ if there exists $\delta>0$ such that
$J(x(\cdot))\geq J(x^{0}(\cdot))$ for any admissible function $x$
for which
$$
\|x(\cdot)-x^{0}(\cdot)\|_{PC^{\alpha}([t_{0},t_{1}],
\mathbb{R}^{n})}< \delta,
$$
where
$$
\|y\|_{PC^{\alpha}([t_{0},t_{1}],
\mathbb{R}^{n})}=\|y(\cdot)\|_{C([t_{0},
t_{1}],\mathbb{R}^{n})}+\|(^{c}D_{t_{0}+}^{\alpha}y)(\cdot)\|_{L^{\infty}([t_{0},
t_{1}], \mathbb{R}^{n})}.
$$
\end{definition}
\begin{theorem}\label{teo41} Let $0< \alpha \leq 1,$ $\beta>0$ and the
function $x^{0}$ deliver a weak local minimum in the problem (P)
$(x^{0} \in wlocmin P),$ the functions $L, \ L_{x}, \ L_{y}$ are
continuous in some neighborhood of the graph $\{(t, x^{0}(t),
(^{c}D_{t_{0}+}^{\alpha}x^{0})(t))|t \in [t_{0}, t_{1}]\}.$ Then
the function $x^{0}$ satisfies the equation:

1)
\begin{equation}\label{equ5}
(t_{1}-t)^{1-\alpha}(I_{t_{1}-}^{\alpha}b)(t)+(t_{1}-t)^{\beta-\alpha}L_{y}(t,
x^{0}(t),(^{c}D_{t_{0}+}^{\alpha}x^{0})(t))=0 
\end{equation}%
on $T,$ if $0<\alpha< \beta,$

2)
\begin{equation}\label{equ6}
(t_{1}-t)^{1-\beta}(I_{t_{1}-}^{\alpha}b)(t)+L_{y}(t,
x^{0}(t),(^{c}D_{t_{0}+}^{\alpha}x^{0})(t))=\frac{k}{\Gamma(\alpha)}(t_{1}-t)^{\alpha-\beta}
\end{equation}%
on $T,$  if \, $0<\beta \leq \alpha\leq 1,$ for some constant
$0\neq k \in \mathbb{R}^{n},$  where $b(t)=(t_{1}-t)^{\beta-1}$
$L_{x}(t, x^{0}(t), (^{c}D_{t_{0}+}^{\alpha}x^{0})(t))$.
\end{theorem}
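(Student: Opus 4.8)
The plan is to follow the classical fixed--endpoint argument: perturb $x^{0}$ by an admissible variation, set the first variation equal to zero, integrate by parts in the fractional sense, and then invoke the generalized Du Bois--Reymond Lemma~\ref{lem31}. Concretely, fix an arbitrary $h\in PC_{0}^{\alpha}([t_{0},t_{1}],\mathbb{R}^{n})$. Since $h(t_{0})=h(t_{1})=0$, every $x^{0}+\varepsilon h$ is admissible for $(P)$, and $\|x^{0}+\varepsilon h-x^{0}\|_{PC^{\alpha}}=|\varepsilon|\,\|h\|_{PC^{\alpha}}<\delta$ once $|\varepsilon|$ is small, so $\phi(\varepsilon):=J(x^{0}+\varepsilon h)$ has a local minimum at $\varepsilon=0$ and $\phi'(0)=0$. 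Differentiation under the integral sign is justified by the mean value theorem together with boundedness of $L_{x},L_{y}$ on a neighborhood of the (bounded) graph of $x^{0}$ and of $h,(^{c}D_{t_{0}+}^{\alpha}h)$, and the dominating function $(t_{1}-t)^{\beta-1}\cdot\mathrm{const}\in L^{1}$ (valid because $\beta>0$); the dominated convergence theorem then gives
$$
\phi'(0)=\int\limits_{t_{0}}^{t_{1}}(t_{1}-t)^{\beta-1}\Big[\langle L_{x}(t,x^{0}(t),(^{c}D_{t_{0}+}^{\alpha}x^{0})(t)),h(t)\rangle+\langle L_{y}(t,x^{0}(t),(^{c}D_{t_{0}+}^{\alpha}x^{0})(t)),(^{c}D_{t_{0}+}^{\alpha}h)(t)\rangle\Big]dt=0.
$$

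Because $h(t_{0})=0$, Proposition~\ref{pro21} gives $h=I_{t_{0}+}^{\alpha}(^{c}D_{t_{0}+}^{\alpha}h)$, so in the first term I would move the fractional integral onto the other factor via the adjoint identity $\int_{t_{0}}^{t_{1}}\langle b(t),(I_{t_{0}+}^{\alpha}g)(t)\rangle dt=\int_{t_{0}}^{t_{1}}\langle(I_{t_{1}-}^{\alpha}b)(t),g(t)\rangle dt$; this Fubini--type identity applies since $b(t)=(t_{1}-t)^{\beta-1}L_{x}(t,x^{0}(t),(^{c}D_{t_{0}+}^{\alpha}x^{0})(t))\in L^{1}$ (as $\beta>0$ and $L_{x}$ along the graph is bounded) and $g=(^{c}D_{t_{0}+}^{\alpha}h)\in PC\subset L^{\infty}$. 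Collecting terms and factoring out $(t_{1}-t)^{\beta-1}$, the condition $\phi'(0)=0$ becomes
$$
\int\limits_{t_{0}}^{t_{1}}(t_{1}-t)^{\beta-1}\langle f(t),(^{c}D_{t_{0}+}^{\alpha}h)(t)\rangle\,dt=0,\qquad f(t):=(t_{1}-t)^{1-\beta}(I_{t_{1}-}^{\alpha}b)(t)+L_{y}(t,x^{0}(t),(^{c}D_{t_{0}+}^{\alpha}x^{0})(t)),
$$
for every $h\in PC_{0}^{\alpha}([t_{0},t_{1}],\mathbb{R}^{n})$. Taking $h=h_{i}e_{i}$ with scalar $h_{i}\in PC_{0}^{\alpha}([t_{0},t_{1}],\mathbb{R})$ reduces this to the scalar hypothesis of Lemma~\ref{lem31} for each component $f_{i}$. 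Case 1) of the lemma ($0<\alpha<\beta$) yields $(t_{1}-t)^{\beta-\alpha}f(t)=0$ on $T$, which, after inserting the definition of $f$ and adding exponents ($\beta-\alpha+1-\beta=1-\alpha$), is exactly (\ref{equ5}); case 2) ($0<\beta\le\alpha\le1$) yields $f(t)=\frac{k}{\Gamma(\alpha)}(t_{1}-t)^{\alpha-\beta}$ on $T$ with $k\in\mathbb{R}^{n}$ the vector of the componentwise constants, which is exactly (\ref{equ6}).

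The main obstacle is verifying that $f\in PC([t_{0},t_{1}],\mathbb{R}^{n})$, so that Lemma~\ref{lem31} is applicable: $b$ is singular at $t_{1}$ when $\beta<1$, and one must show that the weight $(t_{1}-t)^{1-\beta}$ exactly cancels the resulting singularity of $(I_{t_{1}-}^{\alpha}b)(t)$. The key step is the rescaling $\tau=t+s(t_{1}-t)$, giving
$$
(I_{t_{1}-}^{\alpha}b)(t)=\frac{(t_{1}-t)^{\alpha+\beta-1}}{\Gamma(\alpha)}\int\limits_{0}^{1}s^{\alpha-1}(1-s)^{\beta-1}L_{x}\big(t+s(t_{1}-t),x^{0}(t+s(t_{1}-t)),(^{c}D_{t_{0}+}^{\alpha}x^{0})(t+s(t_{1}-t))\big)\,ds;
$$
boundedness of $L_{x}$ along the graph and dominated convergence show this is $O\big((t_{1}-t)^{\alpha+\beta-1}\big)$ as $t\to t_{1}^{-}$, hence $(t_{1}-t)^{1-\beta}(I_{t_{1}-}^{\alpha}b)(t)=O\big((t_{1}-t)^{\alpha}\big)\to0$. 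Continuity of $(I_{t_{1}-}^{\alpha}b)$ on $[t_{0},t_{1})$ — hence of $f$ off the finite discontinuity set of $(^{c}D_{t_{0}+}^{\alpha}x^{0})$, at whose points $f$ inherits only first--kind jumps from $L_{y}$ — follows by splitting $(I_{t_{1}-}^{\alpha}b)(t)$ into an integral over $[t,t_{1}-\delta]$, where $b$ is bounded and the classical continuity of the right--sided fractional integral of a bounded function applies, plus an integral over $[t_{1}-\delta,t_{1}]$, where for $t\le t_{1}-\delta$ the kernel $(\tau-t)^{\alpha-1}$ is bounded and dominated convergence applies. This establishes $f\in PC$ and completes the argument.
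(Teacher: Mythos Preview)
Your proof is correct and follows essentially the same route as the paper: compute the first variation, replace $h$ by $I_{t_{0}+}^{\alpha}(^{c}D_{t_{0}+}^{\alpha}h)$, swap the order of integration (the adjoint/Fubini identity), and invoke Lemma~\ref{lem31} componentwise. You are in fact more careful than the paper, which applies Lemma~\ref{lem31} without verifying the hypothesis $f\in PC$; your rescaling argument showing $(t_{1}-t)^{1-\beta}(I_{t_{1}-}^{\alpha}b)(t)=O((t_{1}-t)^{\alpha})$ fills a gap the paper leaves open.
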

\begin{proof} Let's take an arbitrary fixed function $h \in
PC_{0}^{\alpha}([t_{0}, t_{1}], \mathbb{R}^{n}).$ Since $x^{0} \in
wlocmin P$ then the function of one variable
$$
\phi(\lambda)=J(x^{0}(\cdot)+\lambda
h(\cdot))=\int\limits_{t_{0}}^{t_{1}}(t_{1}-t)^{\beta-1}L(t,
x^{0}(t)+\lambda h(t),
(^{c}D_{t_{0}+}^{\alpha}x^{0})(t)+\lambda(^{c}D_{t_{0}+}^{\alpha}h)(t))dt
$$
has an extremum at $\lambda=0$. From the smoothness conditions
imposed on $L, \, x^{0}, \ h,$ it follows that the function $\phi$
is differentiable at zero. But then, according to Fermat's
theorem, $\phi'(0)=0.$ Differentiating the function $\phi$ and
assuming $\lambda=0$ we obtain
$$
\phi'(0)=\int\limits_{t_{0}}^{t_{1}}(t_{1}-t)^{\beta-1} [\langle
L_{x}(t, x^{0}(t), (^{c}D_{t_{0}+}^{\alpha}x^{0})(t)), h(t)
\rangle
$$
\begin{equation}\label{equ7}
+\langle L_{y}(t, x^{0}(t), (^{c}D_{t_{0}+}^{\alpha}x^{0})(t)),
(^{c}D_{t_{0}+}^{\alpha}h)(t)\rangle]dt=0,\,\,\, h \in
PC_{0}^{\alpha}([t_{0}, t_{1}], \mathbb{R}^{n}).      
\end{equation}%

Now we put $h(t)=\varphi(t)r,$ where $\varphi \in
PC_{0}^{\alpha}([t_{0}, t_{1}], \mathbb{R})$ and $r \in
\mathbb{R}^{n}.$ Given the representation
$$
\varphi(t)=\frac{1}{\Gamma(\alpha)}
\int\limits_{t_{0}}^{t}(t-\tau)^{\alpha-1}(^{c}D_{t_{0}+}^{\alpha}\varphi)(\tau)d\tau,
$$
we transform (\ref{equ7}) as follows:
$$
\int\limits_{t_{0}}^{t_{1}}(t_{1}-t)^{\beta-1}[\langle L_{x}(t,
x^{0}(t), (^{c}D_{t_{0}+}^{\alpha}x^{0})(t)), r
\rangle\frac{1}{\Gamma(\alpha)}\int\limits_{t_{0}}^{t}(t-\tau)^{\alpha-1}(^{c}D_{t_{0}+}^{\alpha}\varphi)(\tau)d\tau
$$
$$
+\langle L_{y}(t, x^{0}(t), (^{c}D_{t_{0}+}^{\alpha}x^{0})(t)), r
\rangle(^{c}D_{t_{0}+}^{\alpha}\varphi)(t)]dt
$$
$$
=\int\limits_{t_{0}}^{t_{1}}(t_{1}-t)^{\beta-1}
\langle\frac{(t_{1}-t)^{1-\beta}}{\Gamma(\alpha)}\int\limits_{t}^{t_{1}}(t_{1}-\tau)^{\beta-1}(\tau-t)^{\alpha-1}L_{x}(\tau,
x^{0}(\tau),(^{c}D_{t_{0}+}^{\alpha}x^{0})(\tau))d\tau
$$
$$
+L_{y}(t, x^{0}(t), (^{c}D_{t_{0}+}^{\alpha}x^{0})(t)), r \rangle
(^{c}D_{t_{0}+}^{\alpha}\varphi)(t)dt=0.
$$

Then by virtue of Lemma \ref{lem31} we have

1)
$\langle(t_{1}-t)^{1-\alpha}(I_{t_{1}-}^{\alpha}b)(t)+(t_{1}-t)^{\beta-\alpha}L_{y}(t),
r \rangle=0$ on $t \in T,$ if $\beta> \alpha >0,$

2) $\langle(t_{1}-t)^{1-\beta}(I_{t_{1}-}^{\alpha}b)(t)+L_{y}(t),
r \rangle=\frac{k_{0}}{\Gamma(\alpha)}t_{1}-t)^{\alpha-\beta}$ on
$t \in T,$ if $0< \beta\leq \alpha \leq 1,$ where $k_{0}\in
\mathbb{R}$ is a constant, $b(t)=(t_{1}-t)^{\beta-1}L_{x}(t,
x^{0}(t), (^{c}D_{t_{0}+}^{\alpha}x^{0})(t)).$

Since $r \in \mathbb{R}^{n}$ is arbitrary, the assertions of
Theorem \ref{teo41} follow from Lemma \ref{lem31}. 
\end{proof}

Equations (\ref{equ5}) and (\ref{equ6}) represent the fractional analogue of the
Euler-Lagrange integral equation. Note that the point of
discontinuity of the first kind of the derivative of a given
function is called a corner point of that function.

\begin{corollary}\label{cor41}\textbf{(The first Weierstrass-Erdmann condition)}
Let $0<\alpha \le 1,$ $\beta
>0$ and the vector function $x^0$ provide the problem (P)  with
at least a weak local extremum, then at each corner point $t\in A$
the equality

\begin{equation}\label{equ8}
f_y\left(t,x^0\left(t\right),\left({}^c{D^{\alpha
}_{t_0+}}x^0\right)\left(t-0\right)\right)=f_y\left(t,x^0\left(t\right),\left({}^c{D^{\alpha
}_{t_0+}}x^0\right)\left(t+0\right)\right)
\end{equation}%
is satisfied.
\end{corollary}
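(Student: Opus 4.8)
The plan is to pass to the one-sided limits in the integral Euler--Lagrange equations \eqref{equ5}--\eqref{equ6} at a corner point, after first establishing that the Riemann--Liouville integral occurring in them is continuous at every interior point, even though $({}^{c}D_{t_{0}+}^{\alpha}x^{0})$ is only piecewise continuous. Throughout we write $f$ for the integrand $L$ of problem~(P), so that \eqref{equ5}--\eqref{equ6} hold with the partials of $f$. Since $x^{0}$ provides at least a weak local extremum (for a maximum one replaces $J$ by $-J$, which changes neither \eqref{equ5}--\eqref{equ6} nor \eqref{equ8}), Theorem~\ref{teo41} applies. By Proposition~\ref{pro21} we have $({}^{c}D_{t_{0}+}^{\alpha}x^{0})\in PC([t_{0},t_{1}],\mathbb{R}^{n})$; hence, using the continuity of $x^{0}$ and of $f_{x},f_{y}$ near the graph of $x^{0}$, the maps $t\mapsto f_{x}(t,x^{0}(t),({}^{c}D_{t_{0}+}^{\alpha}x^{0})(t))$ and $t\mapsto f_{y}(t,x^{0}(t),({}^{c}D_{t_{0}+}^{\alpha}x^{0})(t))$ are piecewise continuous and bounded, with one-sided limits at a corner point $\sigma\in A$ determined by the continuity of $f_{x},f_{y}$ together with the limits $({}^{c}D_{t_{0}+}^{\alpha}x^{0})(\sigma\pm 0)$. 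In particular $b(t)=(t_{1}-t)^{\beta-1}f_{x}(t,x^{0}(t),({}^{c}D_{t_{0}+}^{\alpha}x^{0})(t))$ lies in $L^{1}([t_{0},t_{1}],\mathbb{R}^{n})$ and is bounded on every subinterval $[t_{0},c]$ with $c<t_{1}$.

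The key step is to show that $t\mapsto(I_{t_{1}-}^{\alpha}b)(t)$ is continuous at each $\sigma\in(t_{0},t_{1})$; I expect this to be the only genuine obstacle, since the non-locality of the fractional operators prevents an immediate reduction to the classical case. Fixing $c\in(\sigma,t_{1})$ and restricting $t$ to a neighbourhood of $\sigma$ contained in $(t_{0},c)$, I would split
\[
(I_{t_{1}-}^{\alpha}b)(t)=\frac{1}{\Gamma(\alpha)}\int_{t}^{c}(\tau-t)^{\alpha-1}b(\tau)\,d\tau+\frac{1}{\Gamma(\alpha)}\int_{c}^{t_{1}}(\tau-t)^{\alpha-1}b(\tau)\,d\tau .
\]
The first term equals $(I_{c-}^{\alpha}\widetilde{b})(t)$ with $\widetilde{b}=b|_{[t_{0},c]}$ bounded, hence it is continuous (indeed Hölder continuous) on $[t_{0},c]$ by the standard smoothing property of Riemann--Liouville integrals of bounded functions (cf.\ \cite{19,26}). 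In the second term, for $\tau\in[c,t_{1}]$ and $t$ near $\sigma$ the kernel $(\tau-t)^{\alpha-1}$ is bounded uniformly (because $\tau-t$ stays bounded away from $0$ and $\alpha\le1$) and continuous in $t$, so dominated convergence, using $b\in L^{1}([c,t_{1}],\mathbb{R}^{n})$, gives continuity of that term. Therefore $I_{t_{1}-}^{\alpha}b$ is continuous at $\sigma$.

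Finally I would pass to the limit at a corner point $\sigma\in A\subset(t_{0},t_{1})$; choosing $\delta>0$ so small that $(\sigma-\delta,\sigma)\cup(\sigma,\sigma+\delta)\subset T$, equations \eqref{equ5}--\eqref{equ6} hold on this punctured neighbourhood. If $0<\alpha<\beta$, rewrite \eqref{equ5} as
\[
f_{y}\bigl(t,x^{0}(t),({}^{c}D_{t_{0}+}^{\alpha}x^{0})(t)\bigr)=-(t_{1}-t)^{1-\beta}(I_{t_{1}-}^{\alpha}b)(t),\qquad t\in T,
\]
and if $0<\beta\le\alpha\le1$ rewrite \eqref{equ6} as
\[
f_{y}\bigl(t,x^{0}(t),({}^{c}D_{t_{0}+}^{\alpha}x^{0})(t)\bigr)=\frac{k}{\Gamma(\alpha)}(t_{1}-t)^{\alpha-\beta}-(t_{1}-t)^{1-\beta}(I_{t_{1}-}^{\alpha}b)(t),\qquad t\in T.
\]
In both cases the right-hand side is continuous at $\sigma$: the powers $(t_{1}-t)^{1-\beta}$ and $(t_{1}-t)^{\alpha-\beta}$ are finite and continuous near $\sigma$ since $t_{1}-\sigma>0$, and $I_{t_{1}-}^{\alpha}b$ is continuous at $\sigma$ by the previous step. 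Hence $t\mapsto f_{y}(t,x^{0}(t),({}^{c}D_{t_{0}+}^{\alpha}x^{0})(t))$ has equal limits as $t\to\sigma^{-}$ and $t\to\sigma^{+}$; letting $t\to\sigma^{\mp}$ and using the continuity of $f_{y}$ and $x^{0}$ together with the existence of the one-sided limits $({}^{c}D_{t_{0}+}^{\alpha}x^{0})(\sigma\pm0)$, this limit equality becomes precisely \eqref{equ8}.
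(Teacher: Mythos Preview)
Your proof is correct and follows essentially the same route as the paper: you rewrite the integral Euler--Lagrange equations \eqref{equ5}--\eqref{equ6} so that $f_{y}$ is expressed through a term that is continuous at every interior point, and then take one-sided limits at the corner point. The only difference is that you supply a detailed justification (via the splitting at $c$ and dominated convergence) for the continuity of $I_{t_{1}-}^{\alpha}b$, whereas the paper simply asserts that the right-hand side of the rewritten Euler--Lagrange identity is continuous on $[t_{0},t_{1}]$.
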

\begin{proof} We show that equality (\ref{equ8}) is a direct consequence
of the Euler-Lagrange equation. Indeed, in the case $\beta >\alpha
>0$ the equality

\[{(t_1-t)}^{\beta -\alpha }L_y\left(t,x^0\left(t\right),\left({}^c{D^{\alpha }_{t_0+}}x^0\right)\left(t\right)\right) \]

\begin{equation}\label{equ9}
=\frac{-{(t_1-t)}^{1-\alpha }}{\Gamma (\alpha )}
\int\limits^{t_1}_t{{\left(t_1-\tau \right)}^{\beta -1}{\left(\tau
-t\right)}^{\alpha -1}L_x(\tau ,x^0\left(\tau \right),\ }\
\left({}^c{D^{\alpha }_{t_0+}}x^0\right)\left(\tau \right))\ d\tau
,\ \ t\in T , 
\end{equation}%
and the case $0<\beta \le \alpha \le 1$ the equality
\[L_y\left(t,x^0\left(t\right),\left({}^c{D^{\alpha }_{t_0+}}x^0\right)\left(t\right)\right)=-\frac{{\left(t_1-t\right)}^{1-\beta }}
{\Gamma \left(\alpha \right)}\]
\begin{equation}\label{equ10}
\times \int\limits^{t_1}_t{{\left(t_1-\tau \right)}^{\beta
-1}{\left(\tau -t\right)}^{\alpha -1}L_x(\tau ,x^0\left(\tau
\right),\ }\ \left({}^c{D^{\alpha }_{t_0+}}x^0\right)\left(\tau
\right))\ d\tau +\frac{k}{\Gamma (\alpha )}{(t_1-t)}^{\alpha
-\beta },\ \ t\in T, 
\end{equation}%
are satisfied, respectively.

Thus, the function on the right-hand side of equality (\ref{equ9}) is
continuous on the interval $\left[t_0,t_1\right].$ Therefore, the
function on the left-hand side is the restriction of some function
from $C(\left[t_0,t_1\right],\ \mathbb{R}^n)$ to the set $T$. It
follows that
\[{(t_1-(t+0))}^{\beta -\alpha }L_y\left(t,x^0\left(t\right),\left({}^c{D^{\alpha }_{t_0+}}
x^0\right)\left(t+0\right)\right)\]\[={(t_1-(t-0))}^{\beta -\alpha
}L_y\left(t,x^0\left(t\right),\left({}^c{D^{\alpha
}_{t_0+}}x^0\right)\left(t-0\right)\right),\ \ \ t\in A.\]

From the continuity of the function ${(t_1-t)}^{\beta -\alpha }$
on the segment $\left[t_0,t_1\right]$ follows equality (\ref{equ8}). Using
a similar method, we can prove the validity of equality (\ref{equ8}) based
on equality (\ref{equ10}).
\end{proof}

\bigskip

\section{The Fractional Isoperimetric Problem}\label{sec5}

\bigskip

We now consider the following isoperimetric problem of the
fractional calculus of variations. Given a functional
\begin{equation}\label{equ11}
J_{0}(x(\cdot))=\int\limits_{t_{0}}^{t_{1}}(t_{1}-t)^{\beta-1}L_{0}(t,
x(t), (^{c}D_{t_{0}+}^{\alpha}x)(t))dt, 
\end{equation}%
which functions $x$ minimize (or maximize)$J_{0}$, when subject to
given boundary conditions
\begin{equation}\label{equ12}
x(t_{0})=x_{0}, \,\,\,\, x(t_{1})=x_{1}, 
\end{equation}%
and an integral constraint
\begin{equation}\label{equ13}
J_{1}(x(\cdot))=\int\limits_{t_{0}}^{t_{1}}(t_{1}-t)^{\beta-1}L_{1}(t,
x(t), (^{c}D_{t_{0}+}^{\alpha}x)(t))dt=l_{1},  
\end{equation}%
where $0<\alpha \leq 1,$ $\beta>0$ and $l_{1} \in \mathbb{R}$
given numbers.

The segment $[t_{0}, t_{1}]$ is fixed and finite, $t_{0}< t_{1}.$
Constraints of type (\ref{equ13}) are called isoperimetric. The extremum in
the problem is considered among the functions $x \in
PC^{\alpha}([t_{0}, t_{1}], \mathbb{R})$ satisfying the
isoperimetric condition (\ref{equ13}) and conditions (\ref{equ12}) at the ends; such
functions are called admissible.

In the following presentations, we denote problem (\ref{equ11})-(\ref{equ13}) by $P_1$.
\begin{definition}\label{def51} We say that an admissible function
$x^{0}$ delivers a weak local minimum in problem $P_1$ and
write $x^{0}\in wlocmin P_1$ if there exists $\delta >0$ such that
$J_{0}(x(\cdot))\geq J_{0}(x^{0}(\cdot))$ for any  admissible
function $x$, for which
$\|x(\cdot)-x^{0}(\cdot)\|_{PC^{\alpha}([t_{0}, t_{1}],
\mathbb{R})}< \delta.$
\end{definition}
\begin{theorem}\label{teo51} Let $0<\alpha \leq 1,$ $\beta>0$ and the
function $x^{0}$ delivers a weak local minimum in problem $P_1$ $(x^{0}\in wlocmin P_1),$ the functions $L_{i}, \ L_{ix},
\ L_{iy},$ $i=0, \ 1,$- are continuous in some neighborhood of the
graph $\{(t, x^{0}(t), (^{c}D_{t_{0}+}^{\alpha}x^{0})(t)),$ $t \in
[t_{0}, t_{1}]\}.$ Then there will be numbers $\mu_{0}, \ \mu_{1}$
not equal to zero at the same time and such that for the
Lagrangian $L=\mu_{0}L_{0}+\mu_{1}L_{1}$ the following equation
holds:

1) If $0<\alpha< \beta,$ then
\begin{equation}\label{equ14}
(t_{1}-t)^{1-\alpha}(I_{t_{1}-}^{\alpha}b)(t)+(t_{1}-t)^{\beta-\alpha}L_{y}(t,
x^{0}(t), (^{c}D_{t_{0}+}^{\alpha}x^{0})(t))=0, \,\,\, t \in T,
\end{equation}%

2) If $0< \beta \leq \alpha\leq 1,$ then
\begin{equation}\label{equ15}
(t_{1}-t)^{1-\beta}(I_{t_{1}-}^{\alpha}b)(t)+L_{y}(t, x^{0}(t),
(^{c}D_{t_{0}+}^{\alpha}x^{0})(t))=\frac{k}{\Gamma(\alpha)}(t_{1}-t)^{\alpha-\beta},
\,\,\, t \in T,
\end{equation}%
where $k$ is some constant, and $b(t)=(t_{1}-t)^{\beta-1}L_{x}(t,
x^{0}(t), (^{c}D_{t_{0}+}^{\alpha}x^{0})(t)).$
\end{theorem}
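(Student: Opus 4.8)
The plan is to run the classical Lagrange multiplier argument for isoperimetric problems and then reduce the resulting integral identity to the machinery already developed for Theorem~\ref{teo41}. First I would fix two functions $h_{1},h_{2}\in PC_{0}^{\alpha}([t_{0},t_{1}],\mathbb{R})$ and introduce the two-parameter admissible family $x(\cdot;\lambda_{1},\lambda_{2})=x^{0}(\cdot)+\lambda_{1}h_{1}(\cdot)+\lambda_{2}h_{2}(\cdot)$, which obeys the endpoint conditions~(\ref{equ12}) for every $(\lambda_{1},\lambda_{2})$ near $(0,0)$. Put $\Phi_{i}(\lambda_{1},\lambda_{2})=J_{i}(x(\cdot;\lambda_{1},\lambda_{2}))$ for $i=0,1$. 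Exactly as in the proof of Theorem~\ref{teo41}, the continuity of $L_{i},L_{ix},L_{iy}$ near the graph $\{(t,x^{0}(t),({}^{c}D_{t_{0}+}^{\alpha}x^{0})(t)):t\in[t_{0},t_{1}]\}$ makes $\Phi_{0},\Phi_{1}$ of class $C^{1}$ in a neighborhood of the origin, with
\[
\frac{\partial\Phi_{i}}{\partial\lambda_{j}}(0,0)=\int\limits_{t_{0}}^{t_{1}}(t_{1}-t)^{\beta-1}\big[L_{ix}\,h_{j}(t)+L_{iy}\,({}^{c}D_{t_{0}+}^{\alpha}h_{j})(t)\big]\,dt ,
\]
the derivatives of $L_{i}$ being evaluated at $(t,x^{0}(t),({}^{c}D_{t_{0}+}^{\alpha}x^{0})(t))$.

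Then I would split into two cases. In the \emph{normal} case, assume there is some $h_{2}\in PC_{0}^{\alpha}$ for which $\partial_{\lambda_{2}}\Phi_{1}(0,0)\neq0$, and fix such an $h_{2}$. Since $\Phi_{1}(0,0)=l_{1}$, the implicit function theorem produces a $C^{1}$ function $\lambda_{2}=\eta(\lambda_{1})$ near $\lambda_{1}=0$ with $\eta(0)=0$, $\Phi_{1}(\lambda_{1},\eta(\lambda_{1}))\equiv l_{1}$, and $\eta'(0)=-\partial_{\lambda_{1}}\Phi_{1}(0,0)/\partial_{\lambda_{2}}\Phi_{1}(0,0)$. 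The curves $\lambda_{1}\mapsto x(\cdot;\lambda_{1},\eta(\lambda_{1}))$ are admissible for $P_{1}$, so $x^{0}\in wlocmin P_{1}$ forces the scalar map $\lambda_{1}\mapsto\Phi_{0}(\lambda_{1},\eta(\lambda_{1}))$ to have a local minimum at $0$; Fermat's theorem then gives $\partial_{\lambda_{1}}\Phi_{0}(0,0)+\eta'(0)\,\partial_{\lambda_{2}}\Phi_{0}(0,0)=0$. Putting $\mu_{0}=1$ and $\mu_{1}=-\partial_{\lambda_{2}}\Phi_{0}(0,0)/\partial_{\lambda_{2}}\Phi_{1}(0,0)$ (a fixed number, as $h_{2}$ is fixed) this reads $\partial_{\lambda_{1}}\big(\mu_{0}\Phi_{0}+\mu_{1}\Phi_{1}\big)(0,0)=0$, and since $h_{1}$ ranges over all of $PC_{0}^{\alpha}$ we obtain, with $L=\mu_{0}L_{0}+\mu_{1}L_{1}$,
\begin{equation}\label{equiso}
\int\limits_{t_{0}}^{t_{1}}(t_{1}-t)^{\beta-1}\big[L_{x}\,h(t)+L_{y}\,({}^{c}D_{t_{0}+}^{\alpha}h)(t)\big]\,dt=0,\qquad h\in PC_{0}^{\alpha}([t_{0},t_{1}],\mathbb{R}).
\end{equation}
In the \emph{abnormal} case no such $h_{2}$ exists, so $\partial_{\lambda_{2}}\Phi_{1}(0,0)=0$ for every $h_{2}\in PC_{0}^{\alpha}$; taking $\mu_{0}=0$, $\mu_{1}=1$ and $L=L_{1}$, the identity~(\ref{equiso}) holds again. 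In either case $(\mu_{0},\mu_{1})\neq(0,0)$.

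It then remains to deduce~(\ref{equ14})--(\ref{equ15}) from~(\ref{equiso}), and here I would repeat the argument of Theorem~\ref{teo41} verbatim: substitute the representation $h(t)=\frac{1}{\Gamma(\alpha)}\int_{t_{0}}^{t}(t-\tau)^{\alpha-1}({}^{c}D_{t_{0}+}^{\alpha}h)(\tau)\,d\tau$ into the $L_{x}$-term of~(\ref{equiso}), interchange the order of integration by Fubini so that this term turns into $\int_{t_{0}}^{t_{1}}(I_{t_{1}-}^{\alpha}b)(t)\,({}^{c}D_{t_{0}+}^{\alpha}h)(t)\,dt$ with $b(t)=(t_{1}-t)^{\beta-1}L_{x}(t,x^{0}(t),({}^{c}D_{t_{0}+}^{\alpha}x^{0})(t))$, and then apply Lemma~\ref{lem31} with $f(t)=(t_{1}-t)^{1-\beta}(I_{t_{1}-}^{\alpha}b)(t)+L_{y}(t,x^{0}(t),({}^{c}D_{t_{0}+}^{\alpha}x^{0})(t))$. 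Conclusion~1) of the lemma yields~(\ref{equ14}) in the range $0<\alpha<\beta$, and conclusion~2) yields~(\ref{equ15}) in the range $0<\beta\le\alpha\le1$, which finishes the proof.

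I expect the main obstacle to lie in the degenerate/abnormal bookkeeping rather than in any fractional-specific computation: one must check carefully that $\Phi_{0},\Phi_{1}$ are jointly $C^{1}$ in $(\lambda_{1},\lambda_{2})$ on $PC^{\alpha}([t_{0},t_{1}],\mathbb{R})$ so that the implicit function theorem is legitimate, and keep the two cases cleanly separated so that $(\mu_{0},\mu_{1})\neq(0,0)$ is genuinely guaranteed (in particular so that the abnormal multipliers $\mu_{0}=0$, $\mu_{1}=1$ are admitted exactly when the first variation of $J_{1}$ degenerates on $PC_{0}^{\alpha}$). The fractional ingredient — the Fubini interchange collapsing the $L_{x}$-double integral into a right Riemann--Liouville integral and the subsequent invocation of Lemma~\ref{lem31} — is already handled in Theorem~\ref{teo41} and poses no new difficulty here.
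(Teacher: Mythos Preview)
Your proposal is correct and follows essentially the same route as the paper: the classical two-case Lagrange multiplier argument (abnormal case $\delta J_{1}\equiv 0$ with $\mu_{0}=0$, $\mu_{1}=1$; normal case with $\mu_{0}=1$ and $\mu_{1}$ determined by a fixed auxiliary variation), followed by reduction to the integral identity~(\ref{equ7}) and the Fubini/Lemma~\ref{lem31} machinery of Theorem~\ref{teo41}. The only cosmetic difference is that in the normal case the paper argues via the nonvanishing Jacobian determinant of $(\Gamma_{0},\Gamma_{1})$ and an open-mapping contradiction, whereas you parametrize the constraint $\Phi_{1}=l_{1}$ by the implicit function theorem and apply Fermat's theorem---two standard, equivalent variants of the same idea.
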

\begin{proof} For an arbitrary but fixed function $h \in
PC_{0}^{\alpha}([t_{0}, t_{1}], \mathbb{R})$, we calculate the
first variations of the functionals $J_{0}$ and $J_{1}$ by
Lagrange
$$
\delta J_{i}(x^{0}(\cdot),
h(\cdot))=\int\limits_{t_{0}}^{t_{1}}(t_{1}-t)^{\beta-1}[L_{ix}(t,
x^{0}(t), (^{c}D_{t_{0}+}^{\alpha}x^{0})(t))h(t)
$$
$$
+L_{iy}(t, x^{0}(t),
(^{c}D_{t_{0}+}^{\alpha}x^{0})(t))(^{c}D_{t_{0}+}^{\alpha}h)(t)]dt,
\,\,\, i=0, \ 1.
$$

One of two cases is possible: either $\delta J_{1}(x^{0}(\cdot),
h(\cdot)) \equiv 0,$ $\forall h \in PC_{0}^{\alpha}([t_{0},
t_{1}], \mathbb{R}),$ or there is a function $h_{1} \in
PC_{0}^{\alpha}([t_{0}, t_{1}], \mathbb{R})$ such that $\delta
J_{1}(x^{0}(\cdot), h_{1}(\cdot)) \neq 0.$
\\

In the first case, according to Theorem \ref{teo41} it follows that the
following equations are satisfied:

1) If $0<\alpha <\beta,$ then
$$
(t_{1}-t)^{1-\alpha}(I_{t_{1}-}^{\alpha}b_{1})(t)+(t_{1}-t)^{\beta-\alpha}L_{1y}(t,
x^{0}(t), (^{c}D_{t_{0}+}^{\alpha}x^{0})(t))=0, \,\,\, t \in T,
$$

2) If $0<\beta \leq \alpha \leq 1,$ then
$$
(t_{1}-t)^{1-\beta}(I_{t_{1}-}^{\alpha}b_{1})(t)+L_{1y}(t,
x^{0}(t),
(^{c}D_{t_{0}+}^{\alpha}x^{0})(t))=\frac{k}{\Gamma(\alpha)}(t_{1}-t)^{\alpha-\beta},
\,\,\, t \in T,
$$
where $k \neq 0$ is some constant, $
b_{1}(t)=(t_{1}-t)^{\beta-1}L_{1x}(t, x^{0}(t),
(^{c}D_{t_{0}+}^{\alpha}x^{0})(t)).$

Putting $\mu_{0}=0, \ \mu_{1}=1$ from these equations we
immediately obtain equations (\ref{equ14}) and (\ref{equ15}).

Now let's consider the second case. To do this, we introduce
functions of two variables
$$
\Gamma_{i}(\gamma_{0},
\gamma_{1})=J_{i}(x^{0}(\cdot)+\gamma_{0}h(\cdot)+\gamma_{1}h_{1}(\cdot)),
\,\,\, i=0, \ 1,
$$
that are continuously differentiable in the  neighborhood of zero, and
$$
\frac{\partial \Gamma_{i}(0, 0)}{\partial \gamma_{0}}=\delta
J_{i}(x^{0}(\cdot), h(\cdot)), \,\,\,\,\frac{\partial
\Gamma_{i}(0, 0)}{\partial \gamma_{1}}=\delta J_{i}(x^{0}(\cdot),
h_{1}(\cdot)), \,\,\, i=0, \ 1.
$$

We show that for any $h \in PC_{0}^{\alpha}([t_{0}, t_{1}],
\mathbb{R})$ the equality
\begin{equation}\label{equ16}
\frac{\partial(\Gamma_{0}, \Gamma_{1})}{\partial (\gamma_{0},
\gamma_{1})}=\left| \begin{array}{c} \delta J_{0}(x^{0}(\cdot),
h(\cdot)) \,\,\,\,\delta J_{0}(x^{0}(\cdot), h_{1}(\cdot)) \\
\\
\delta J_{1}(x^{0}(\cdot), h(\cdot)) \,\,\,\, \delta
J_{1}(x^{0}(\cdot), h_{1}(\cdot))
\end{array}\right|=0
\end{equation}%
holds.

If the determinant (\ref{equ16}) is not equal to zero then the mapping
$(\gamma_{0}, \gamma_{1})\rightarrow (\Gamma_{0}(\gamma_{0},
\gamma_{1}), \ \Gamma_{1}(\gamma_{0}, \gamma_{1}))$ takes some
neighborhood of the point $(0, \ 0)$ to some neighborhood of the
point $(\Gamma_{0}(0, 0), \, \Gamma_{1}(0, 0))$. Therefore there
are such $\gamma_{0}$ and $\gamma_{1}$, and therefore an
admissible function
$x^{0}(\cdot)+\gamma_{0}h(\cdot)+\gamma_{1}h_{1}(\cdot),$ such
that
$$
\Gamma_{0}(\gamma_{0},
\gamma_{1})=J_{0}(x^{0}(\cdot)+\gamma_{0}h(\cdot)+\gamma_{1}h_{1}(\cdot))=J_{0}(x^{0}(\cdot))-\varepsilon,
$$
where $\varepsilon>0$, and $\Gamma_{1}(\gamma_{0},
\gamma_{1})=\Gamma_{1}(0, 0)=J_{1}(x^{0}(\cdot))=l_{1}.$

This means that the function $x^{0}$ does not provide a local
minimum in problem ($P_1$).

Therefore equalities (\ref{equ16}) are satisfied. From equalities (\ref{equ16}) we
have
$$
\delta J_{0}(x^{0}(\cdot), h(\cdot))\cdot \delta
J_{1}(x^{0}(\cdot), h_{1}(\cdot)) -\delta J_{0}(x^{0}(\cdot),
h_{1}(\cdot))\cdot \delta J_{1}(x^{0}(\cdot), h(\cdot))=0.
$$

Putting $\mu_{0}=1,$ $\mu_{1}=-\frac{\delta J_{0}(x^{0}(\cdot),
h_{1}(\cdot))}{\delta J_{1}(x^{0}(\cdot), h_{1}(\cdot))}$ we get
that
$$
\mu_{0}\delta J_{0}(x^{0}(\cdot), h(\cdot))+\mu_{1}\delta
J_{1}(x^{0}(\cdot), h(\cdot))\equiv 0
$$
for any $h(\cdot) \in PC_{0}^{\alpha}([t_{0}, t_{1}],
\mathbb{R}).$

Hence for any $h(\cdot) \in PC_{0}^{\alpha}([t_{0}, t_{1}],
\mathbb{R})$ we have
$$
\int\limits_{t_{0}}^{t_{1}}(t_{1}-t)^{\beta-1}[L_{x}(t, x^{0}(t),
(^{c}D_{t_{0}+}^{\alpha}x^{0})(t))h(t) +L_{y}(t, x^{0}(t),
(^{c}D_{t_{0}+}^{\alpha}x^{0})(t))(^{c}D_{t_{0}+}^{\alpha}h)(t)]dt=0.
$$

From here, by Theorem \ref{teo41}, we obtain that the Theorem \ref{teo51} is true.
\end{proof}

\begin{remark}\label{rem1} Note that an analogue of the necessary Euler-Lagrange condition for the simplest vector isoperimetric problem can be obtained similarly to the scalar case.
\end{remark}
\begin{eexample}\label{exam51} Consider the following fractional
isoperimetric problem
$$
J_{0}(x(\cdot))=\int\limits_{0}^{1}(1-t)^{\beta-1}((^{c}D_{0+}^{\alpha}x)(t))^{2}dt,
\ \ \ J_{1}(x(\cdot))=\int\limits_{0}^{1}(1-t)^{\beta-1}x(t)dt=0,
$$
$$
x(0)=0, \,\,\, x(1)=1, \,\,\, 0< \alpha \leq 1, \,\,\, \beta >0.
$$

The Lagrangian function has the form $
L=\mu_{0}(^{c}D_{0+}^{\alpha}x)^{2}+\mu_{1}x.$ In the case of
$\beta> \alpha >0$ from equation (\ref{equ14}) we get
\begin{equation}\label{equ17}
(1-t)^{1-\alpha}\frac{\mu_{1}}{\Gamma(\alpha)}
\int\limits_{t}^{1}(1-\tau)^{\beta-1}(\tau-t)^{\alpha-1}d\tau+
2\mu_{0}(^{c}D_{0+}^{\alpha}x)(t)=0.
\end{equation}%

If $\mu_{0}=0$, then from this equation it follows that
$\mu_{1}\frac{\Gamma(\beta)}{\Gamma(\alpha+\beta)}(1-t)^{\beta}=0.$
Thus, both Lagrange multipliers are equal to zero, which is impossible. Now let's put $\mu_{0}=\frac{1}{2}.$ Then
equation (\ref{equ17}) will take the form
$$
(^{c}D_{0+}^{\alpha}x)(t)=-\mu_{1}\frac{\Gamma(\beta)}{\Gamma(\alpha+\beta)}(1-t)^{\alpha},
\,\,\, t \in [0, \ 1].
$$

The solution to this equation has the form
$$
x(t)=x(0)-\frac{\mu_{1}\Gamma(\beta)}{\Gamma(\alpha)\Gamma(\alpha+\beta)}
\int\limits_{0}^{t}(t-\tau)^{\alpha-1}(1-\tau)^{\alpha}d\tau,
\,\,\, t \in [0, \ 1].
$$

Here, taking into account the boundary conditions, have
$$
\mu_{1}=-\frac{2\Gamma(\alpha+1)\Gamma(\alpha+\beta)}{\Gamma(\beta)}.
$$
And from the isoperimetric condition it follows that
$\frac{2\alpha
\Gamma(\beta)\Gamma(\alpha)}{(2\alpha+\beta)\Gamma(\alpha+\beta)}=0$,
which is impossible. Thus, in the case $\beta > \alpha>0$ the
isoperimetric problem has no solution.

Now consider the case $0< \beta\leq \alpha \leq 1.$ In this case,
equation (\ref{equ15}) has the form
\begin{equation}\label{equ18}
\frac{\mu_{1}(1-t)^{1-\beta}}{\Gamma(\alpha)}
\int\limits_{t}^{1}(1-\tau)^{\beta-1}(\tau-t)^{\alpha-1}d\tau +
2\mu_{0}(^{c}D_{0+}^{\alpha}x)(t)=\frac{k}{\Gamma(\alpha)}(1-t)^{\alpha-\beta}.
\end{equation}%

If $\mu_{0}=0,$ then from this equation we get $\mu_{1}=k
\frac{\Gamma(\alpha+\beta)}{\Gamma(\alpha)\Gamma(\beta)}(1-t)^{-\beta}.$
But this is impossible.

Now let's put $\mu_{0}=\frac{1}{2}.$ Then from equation (\ref{equ18}) it
follows that
$$
x(t)=x(0)+\frac{1}{\Gamma(\alpha)}\int\limits_{0}^{t}(t-\tau)^{\alpha-1}
\left[\frac{k}{\Gamma(\alpha)}(1-\tau)^{\alpha-\beta}-\mu_{1}(1-\tau)^{\alpha}\frac{\Gamma(\beta)}{\Gamma(\alpha+\beta)}\right]d\tau.
$$

Taking into account the boundary conditions and the isoperimetric
condition we have
$$
\mu_{1}=\frac{2\alpha(4\alpha^{2}-\beta^{2})\Gamma(\alpha)\Gamma(\alpha+\beta)}{\beta^{2}\Gamma(\beta)},
\,\, k=\frac{4(2\alpha-\beta)\Gamma^{2}(\alpha+1)}{\beta^{2}}
$$
and
$$
x(t)=\frac{4\alpha^{2}(2\alpha-\beta)}{\beta^{2}}\int\limits_{0}^{t}(t-\tau)^{\alpha-1}(1-\tau)^{\alpha-\beta}d\tau
-\frac{2\alpha(4\alpha^{2}-\beta^{2})}{\beta^{2}}\int\limits_{0}^{t}(t-\tau)^{\alpha-1}(1-\tau)^{\alpha}d\tau.
$$

Hence in the classical case, i.e. when $\alpha=\beta=1,$ it
follows that $x(t)=3t^{2}-2t$.
\end{eexample}

\section{ Weierstrass and Legendre necessary conditions for fractional calculus of variations
variations}\label{sec6}

In this section, we first derive a first-order necessary condition
for a strong extremum, then, by additionally requiring the
existence of a continuous derivative $L_{y}$, we obtain as a
corollary the Weierstrass necessary condition.

For the $0< \alpha \leq 1$ and $\beta >0$, we consider the following
simplest problem of fractional variational calculus in space
$PC^{\alpha}([t_{0}, t_{1}], \ \mathbb{R}^{n}):$
\begin{equation*}\label{equP}
J(x(\cdot))=\int\limits_{t_{0}}^{t_{1}}(t_{1}-t)^{\beta-1}L(t,
x(t), (^{c}D_{t_{0}+}^{\alpha}x)(t))dt \rightarrow extr, \,\,\,
x(t_{0})=x_{0}, \,\,\, x(t_{1})=x_{1}.\eqno{(P_2)} 
\end{equation*}%
Here the segment $[t_{0}, t_{1}]$ is assumed to be fixed and finite,
$t_{0}<t_{1}.$ $L=L(t, x, y)$ is a function $2n+1$ variables. The
extremum in problem ($P_2$) is considered among the space functions
$PC^{\alpha}([t_{0}, t_{1}], \mathbb{R}^{n})$ satisfying the
conditions at the ends or boundary conditions: $x(t_{0})=x_{0},$
$x(t_{1})=x_{1}.$ Such functions are called admissible.

\begin{definition}\label{def61} We will say that an admissible function
$x^{0}$ deliver a strong local minimum in problem ($P_2$), and write
$x^{0}\in strlocmin P_2,$ if there exists $\delta >0$ such that
$J(x(\cdot))\geq J(x^{0}(\cdot))$ for any admissible function $x$
for which
$$
\|x(\cdot)-x^{0}(\cdot)\|_{C([t_{0}, t_{1}],
\mathbb{R}^{n})}<\delta.
$$
\end{definition}

\begin{theorem}\label{teo61} Let the function $x^{0}\in
PC^{\alpha}([t_{0}, \,t_{1}], \, \mathbb{R}^{n})$ provide a strong
(weak) local minimum in problem ($P_2$) ($x^{0}\in strlocmin
P_2$($x^{0}\in wlocmin P_2$)), the functions $L, \, L_{x}$ are
continuous in some neighborhood of the graph $\{(t, x^{0}(t),
\,(^{c}D_{t_{0}+}^{\alpha}x^{0})(t))|t \in [t_{0}, t_{1}]\}$. Then
(Then there exists a number $\delta > 0$ such that) the inequality
$$
aL(\tau, x^{0}(\tau), (^{c}D_{t_{0}+}^{\alpha}x^{0})(\tau)+\xi)+
bL(\tau,
x^{0}(\tau),(^{c}D_{t_{0}+}^{\alpha}x^{0})(\tau)-\xi\frac{a}{b})
$$
\begin{equation}\label{equ19}
-(a+b)L(\tau, x^{0}(\tau),
(^{c}D_{t_{0}+}^{\alpha}x^{0})(\tau))\geq 0, \,\,\, \forall \tau
\in T, \,\, \forall a,\, b>0, \,\, \forall\xi \in \mathbb{R}^{n},
\end{equation}%
($\forall \tau \in T,$ $\forall a, \ b
>0,$ $\frac{a}{b}<1,$ $\forall \xi \in B_{\delta/2}(0)=\{\xi| \xi \in \mathbb{R}^{n}, \ \|\xi\|\leq \frac{\delta}{2}\}$ ) is satisfied on $x^{0}$.
\end{theorem}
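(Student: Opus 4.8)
The plan is to adapt the classical ``needle variation'' proof of the Weierstrass condition, the one genuinely new ingredient being a second, localized \emph{corrector} needle that repairs the right endpoint condition without spoiling the first-order asymptotics. Fix a continuity point $\tau\in T$, so that $(^{c}D_{t_{0}+}^{\alpha}x^{0})$ is continuous at $\tau$ (for the weak case also impose $a/b<1$ and $\|\xi\|\le\delta/2$, where $\delta>0$ is the radius from the definition of weak local minimum; and for the strong conclusion one reads the hypothesis as: $L$ is defined and, together with $L_{x}$, continuous on $[t_{0},t_{1}]\times\mathbb{R}^{n}\times\mathbb{R}^{n}$). Choose a fixed interior point $\bar t\in(t_{0},t_{1})$ with $\bar t\ne\tau$ and bounded away from $t_{1}$ (e.g. $\bar t=(t_{0}+\tau)/2$ if $\tau>(t_{0}+t_{1})/2$, and $\bar t=(\tau+t_{1})/2$ otherwise), so that for all small $\varepsilon>0$ the intervals $[\tau,\tau+(a+b)\varepsilon)$ and $[\bar t,\bar t+\varepsilon)$ are disjoint and contained in $(t_{0},t_{1})$. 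Let $v_{\varepsilon}\in PC([t_{0},t_{1}],\mathbb{R}^{n})$ equal $\xi$ on $[\tau,\tau+a\varepsilon)$, equal $-\tfrac{a}{b}\xi$ on $[\tau+a\varepsilon,\tau+(a+b)\varepsilon)$, equal a constant $c_{\varepsilon}\in\mathbb{R}^{n}$ on $[\bar t,\bar t+\varepsilon)$, and equal $0$ elsewhere, and set $h_{\varepsilon}=I_{t_{0}+}^{\alpha}v_{\varepsilon}$. By Definition~\ref{def23} and Propositions~\ref{pro21}--\ref{pro22}, $h_{\varepsilon}\in PC^{\alpha}([t_{0},t_{1}],\mathbb{R}^{n})$, $h_{\varepsilon}(t_{0})=0$, and $(^{c}D_{t_{0}+}^{\alpha}h_{\varepsilon})=v_{\varepsilon}$ at continuity points. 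The vector $c_{\varepsilon}$ is chosen so that $h_{\varepsilon}(t_{1})=\tfrac{1}{\Gamma(\alpha)}\int_{t_{0}}^{t_{1}}(t_{1}-s)^{\alpha-1}v_{\varepsilon}(s)\,ds=0$: a Taylor expansion of $(t_{1}-s)^{\alpha-1}$ shows that the first two pieces contribute $O(\varepsilon^{2})$ to this integral (the $O(\varepsilon)$ terms cancel, exactly as in the classical case $\alpha=1$ where $h_{\varepsilon}$ has compact support), while the third piece contributes $(t_{1}-\bar t)^{\alpha-1}\varepsilon\,c_{\varepsilon}+O(\varepsilon^{2})$; hence $c_{\varepsilon}=O(\varepsilon)$ and $h_{\varepsilon}\in PC_{0}^{\alpha}([t_{0},t_{1}],\mathbb{R}^{n})$, so $x^{0}+h_{\varepsilon}$ is admissible.

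Next I would record the size estimates. Inside the main needle $h_{\varepsilon}(t)=O(\varepsilon^{\alpha})$, while outside it $h_{\varepsilon}(t)=O(\varepsilon^{2})$ --- this is the non-locality ``tail'': for $\alpha<1$ the fractional needle does not have compact support, but its tail is only of order $\varepsilon^{2}$ and hence negligible. Consequently $\|h_{\varepsilon}\|_{C}=O(\varepsilon^{\alpha})\to0$. Also $\|(^{c}D_{t_{0}+}^{\alpha}h_{\varepsilon})\|_{L^{\infty}}\le\max\{\|\xi\|,\tfrac{a}{b}\|\xi\|,\|c_{\varepsilon}\|\}$, which equals $\|\xi\|$ for small $\varepsilon$ when $a/b<1$. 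Thus in the strong case $\|h_{\varepsilon}\|_{C}<\delta$ for small $\varepsilon$, with no restriction on $a,b,\xi$; in the weak case $\|h_{\varepsilon}\|_{PC^{\alpha}}\le\|h_{\varepsilon}\|_{C}+\|\xi\|<\delta$ for small $\varepsilon$ whenever $\|\xi\|\le\delta/2$. In either case $x^{0}+h_{\varepsilon}$ is an admissible competitor, so $\Delta J_{\varepsilon}:=J(x^{0}+h_{\varepsilon})-J(x^{0})\ge0$ for all small $\varepsilon>0$.

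The core of the argument is the expansion of $\Delta J_{\varepsilon}$ as $\varepsilon\to0^{+}$. Split $[t_{0},t_{1}]$ into the main needle, the corrector needle, and the complement. On the complement the $y$-argument of $L$ is unchanged and the $x$-argument changes by $h_{\varepsilon}=O(\varepsilon^{2})$, so by continuity of $L_{x}$ and the mean value theorem this part is $O(\varepsilon^{2})$. On the corrector needle the integrand is bounded, uniformly in $t$, by $(t_{1}-t)^{\beta-1}$ times the modulus of continuity of $L$ evaluated at $\|h_{\varepsilon}\|_{C}+\|c_{\varepsilon}\|\to0$; since $\bar t$ is fixed and bounded away from $t_{1}$ the weight stays bounded there, so this part is $\varepsilon\cdot o(1)=o(\varepsilon)$. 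On the main needle, using that $\tau\in T$ (hence $(^{c}D_{t_{0}+}^{\alpha}x^{0})(t)\to(^{c}D_{t_{0}+}^{\alpha}x^{0})(\tau)$ uniformly on the shrinking needle), that $x^{0}$ is continuous with $h_{\varepsilon}\to0$, and that $L$ is continuous, the two sub-integrals (of lengths $a\varepsilon$ and $b\varepsilon$) equal, respectively, $a\varepsilon(t_{1}-\tau)^{\beta-1}[L(\tau,x^{0}(\tau),(^{c}D_{t_{0}+}^{\alpha}x^{0})(\tau)+\xi)-L(\tau,x^{0}(\tau),(^{c}D_{t_{0}+}^{\alpha}x^{0})(\tau))]+o(\varepsilon)$ and $b\varepsilon(t_{1}-\tau)^{\beta-1}[L(\tau,x^{0}(\tau),(^{c}D_{t_{0}+}^{\alpha}x^{0})(\tau)-\tfrac{a}{b}\xi)-L(\tau,x^{0}(\tau),(^{c}D_{t_{0}+}^{\alpha}x^{0})(\tau))]+o(\varepsilon)$. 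Adding everything,
$$
0\le\Delta J_{\varepsilon}=\varepsilon\,(t_{1}-\tau)^{\beta-1}\,G(\tau,a,b,\xi)+o(\varepsilon),
$$
where $G(\tau,a,b,\xi)$ denotes the left-hand side of (\ref{equ19}). Dividing by $\varepsilon(t_{1}-\tau)^{\beta-1}>0$ and letting $\varepsilon\to0^{+}$ yields (\ref{equ19}) for every $\tau\in T$ with $\tau<t_{1}$; the case $\tau=t_{1}$ (if it belongs to $T$) follows by taking $\tau_{n}\uparrow t_{1}$ and using the continuity of $L$, $x^{0}$ and $(^{c}D_{t_{0}+}^{\alpha}x^{0})$.

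I expect the main obstacle to be precisely the endpoint correction. Because the fractional needle carries a nonzero $O(\varepsilon^{2})$ ``mass'' at $t_{1}$ (unlike the classical needle, which is supported away from $t_{1}$), a correction is unavoidable; and since only $L$ and $L_{x}$ --- not $L_{y}$ --- are assumed continuous, a correction spread over $[t_{0},t_{1}]$ would perturb the $y$-argument of $L$ over the whole interval and could contaminate the $O(\varepsilon)$ term. Localizing the correction on an interval of length $\varepsilon$ sitting at a fixed point bounded away from $t_{1}$ is what keeps its effect at $o(\varepsilon)$ while still absorbing the $O(\varepsilon^{2})$ deficit; getting these orders matched --- together with the uniform convergence on the shrinking main needle using only that $\tau$ is a continuity point --- is the technical heart of the proof.
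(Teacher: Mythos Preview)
Your overall strategy---a two-level needle with a separate localized corrector at a fixed point $\bar t$---is a legitimate alternative to the paper's construction, which instead corrects \emph{within} the main needle by perturbing the two amplitudes to $\xi(1-k(\varepsilon))$ and $-\xi(a/b+k(\varepsilon))$ with $k(\varepsilon)=\varphi(t_{1},\varepsilon)/\psi[0,a+b](t_{1},\varepsilon)\to0$. Your version keeps the main amplitudes exactly $\xi$ and $-\tfrac{a}{b}\xi$, which makes the limits for $J_{1},J_{2}$ cleaner, at the cost of introducing a second interval; the paper avoids a second interval but must track $k(\varepsilon)$ through the $J_{1},J_{2}$ limits via L'H\^{o}pital. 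Both buy the same thing: an admissible $h_{\varepsilon}\in PC_{0}^{\alpha}$ whose Caputo derivative deviates from zero only by $o(1)$ outside the main needle.

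There is, however, a real gap in your tail estimate. You assert that ``outside it $h_{\varepsilon}(t)=O(\varepsilon^{2})$'' and hence the complement integral is $O(\varepsilon^{2})$. This is false: for $t>\tau+(a+b)\varepsilon$ the main-needle contribution to $h_{\varepsilon}$ is exactly (up to a factor $\xi/\Gamma(\alpha+1)$) the function the paper calls $\varphi(t,\varepsilon)$, and one computes
\[
\varphi(t,\varepsilon)=\tfrac{1}{2}\alpha(\alpha-1)a(a+b)(t-\tau)^{\alpha-2}\varepsilon^{2}+o(\varepsilon^{2}),
\]
which is \emph{not} uniformly $O(\varepsilon^{2})$: at $t=\tau+(a+b)\varepsilon$ it is of order $\varepsilon^{\alpha}$. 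Since $\alpha-2<-1$, the integral $\int_{\tau+(a+b)\varepsilon}^{t_{1}}(t-\tau)^{\alpha-2}\,dt$ blows up like $\varepsilon^{\alpha-1}$, so after multiplying by $\varepsilon^{2}$ the complement term is $O(\varepsilon^{1+\alpha})$, not $O(\varepsilon^{2})$. This is still $o(\varepsilon)$, so your conclusion survives, but the argument as written does not prove it. The paper handles this carefully (its estimates (25) and (29)--(30)): one bounds $\|h(t,\varepsilon)\|\le k_{1}(t-\tau)^{\alpha-2}\varepsilon^{2}$ and then, for $0<\beta<1$, splits the $t$-integral at an intermediate point $\eta\in(\tau,t_{1})$ to control separately the singularity of $(t-\tau)^{\alpha-2}$ near $\tau$ and that of $(t_{1}-t)^{\beta-1}$ near $t_{1}$. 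You need the same device; a bare mean-value-plus-$O(\varepsilon^{2})$ argument is not enough when $0<\alpha<1$.
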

\begin{proof} Let the point $\tau \in [t_{0}, \ t_{1}]\cap T$,
the numerical parameters $a, \ b \in (0, \ +\infty)$ and the
vector $\xi \in \mathbb{R}^{n}$ be fixed. We will choose
$\varepsilon_{1}>0$ so that $[\tau, \
\tau+(a+b)\varepsilon_{1})\subset T$. Let's introduce the
functions
$$
\psi[l_{1}, \ l_{2}](t,
\varepsilon)=(t-(\tau+l_{1}\varepsilon))^{\alpha}-(t-(\tau+l_{2}\varepsilon))^{\alpha},
\,\,t \geq \tau+l_{2}\varepsilon, \, l_{2}> l_{1},
$$
$$
 l_{i}\in \{0, \ a, \ a+b\}, \,\, i=1, 2, \,\, \varepsilon \in
(0, \varepsilon_{1}],
$$
$$
\varphi(t, \varepsilon)=\psi[0, a](t,
\varepsilon)-\frac{a}{b}\psi[a, a+b](t, \varepsilon),\,\,\, t \geq
\tau+(a+b)\varepsilon,
$$
and
$$
k(\varepsilon)=\frac{\varphi(t_{1}, \varepsilon)}{\psi[0,
a+b](t_{1}, \varepsilon)}, \,\,\, \varepsilon \in (0, \
\varepsilon_{1}].
$$

Using the rule of G.L'Hopital, we obtain that
\begin{equation}\label{equ20}
\lim\limits_{\varepsilon\rightarrow 0}k(\varepsilon)=0.
\end{equation}%

By direct calculation we can verify that
\begin{equation}\label{equ21}
\varphi(t,
\varepsilon)=\frac{\varepsilon^{2}}{2}\alpha(\alpha-1)a(a+b)(t-\tau)^{\alpha-2}+
o(\varepsilon^{2},t), \,\,\, t \geq \tau+(a+b)\varepsilon,
\end{equation}%
\begin{equation}\label{equ22}
\psi[0, a+b](t, \varepsilon)=\varepsilon \alpha
(a+b)(t-\tau)^{\alpha-1}+ o(\varepsilon,t), \,\,\, t \geq
\tau+(a+b)\varepsilon.
\end{equation}%

Next, assume that $\varepsilon_{2}\in (0, \varepsilon_{1}]$ is a
number such that for any $\varepsilon \in (0, \varepsilon_{2}]$ the
inequality $|k(\varepsilon)|<a_{0}$ holds, where $a_{0}=\min\{1, \
\frac{a}{b}\}.$

For any $\varepsilon \in (0, \varepsilon_{2}]$ we define a special
variation $h(t, \varepsilon)=h(t, \tau, a, b, \xi, \varepsilon)$ of
the function $x^{0}$ as follows:
\begin{equation*}
    h(t,\varepsilon)=
    \begin{cases}
        0,&t\in [t_{0},\ \tau],\  \\
        \frac{\xi(1-k(\varepsilon))}{\Gamma (\alpha +1)}(t-\tau)^{\alpha},&t\in[\tau, \tau+a \varepsilon], \\
        \frac{\xi(1-k(\varepsilon))}{\Gamma (\alpha +1)}\psi[0, a](t,\varepsilon)-\frac{\xi(\frac{a}{b}+k(\varepsilon))}{\Gamma (\alpha+1)}(t-(\tau+a \varepsilon))^{\alpha},&  t \in [\tau+a\varepsilon,\tau+(a+b)\varepsilon],\ \\
        \frac{\xi}{\Gamma (\alpha +1)}[\varphi(t, \varepsilon)-k(\varepsilon)\psi[0,a+b](t, \varepsilon)], & t \in [\tau+(a+b)\varepsilon, t_{1}].
    \end{cases}
\end{equation*}
It follows that for any $\varepsilon \in (0, \varepsilon_{2}]$ the
function $h(\cdot, \varepsilon)\in C([t_{0}, t_{1}],
\mathbb{R}^{n})$ and $h(t_{0}, \varepsilon)=h(t_{1},
\varepsilon)=0$ . Using the inequality
$$
\sigma_{2}^{\alpha}-\sigma_{1}^{\alpha}\leq
(\sigma_{2}-\sigma_{1})^{\alpha}, \,\,\, 0< \sigma_{1}\leq
\sigma_{2}, \,\, 0<\alpha\leq 1,
$$
we can show that for any  $\varepsilon \in (0, \varepsilon_{2}]$ the
inequality
\begin{equation}\label{equ23}
\|h(\cdot)\|_{C([t_{0}, t_{1}], R^{n})}\leq M
\varepsilon^{\alpha}, 
\end{equation}%
holds, where
$M=\frac{\|\xi\|}{\Gamma(\alpha+1)}\left[2a^{\alpha}+\left(\frac{a}{b}+1
\right)b^{\alpha}\right].$

The fractional Caputo derivative of the  $\alpha$-th order of the
function $h(t, \varepsilon) $ has the form
\begin{equation*}
(^{c}D_{t_{0}+}^{\alpha}h)(t,\varepsilon)=
    \begin{cases}
        \xi(1-k(\varepsilon)),& t\in [\tau,\ \tau+a\varepsilon],\  \\
        -\xi(\frac{a}{b}+k(\varepsilon)), & t\in[\tau+a\varepsilon, \tau+(a+b) \varepsilon],\ \\
        0, & t \in [t_{0}, \tau)\cup (\tau+(a+b)\varepsilon, t_{1}].
    \end{cases}
\end{equation*}
Hence the function $h(\cdot, \varepsilon)\in
PC_{0}^{\alpha}([t_{0}, t_{1}], \mathbb{R}^{n}),$ $\varepsilon \in
(0, \varepsilon_{2}].$

Now let's put
$$
x(t, \varepsilon)=x^{0}(t)+h(t,\varepsilon), \,\,\, t \in [t_{0},
t_{1}], \,\,\, \varepsilon \in (0, \varepsilon_{2}].
$$

It is obvious that the function $x(t, \varepsilon)$ is admissible
in problem ($P_2$), i.e. $x(\cdot, \varepsilon)\in PC^{\alpha}([t_{0},
t_{1}], \mathbb{R}^{n})$ and $x(t_{i}, \varepsilon)=x_{i},$ $i=0,
\ 1$, $\forall \varepsilon \in (0, \varepsilon_{2}]$. From
inequality (\ref{equ23}) it follows that $x(t, \varepsilon)\rightarrow
x_{0}(t)$ in the metric of space $C([t_{0}, t_{1}],
\mathbb{R}^{n})$ with $\varepsilon \rightarrow 0$. Therefore
\begin{equation}\label{equ24}
\lim\limits_{\varepsilon\rightarrow 0}x(t, \varepsilon)=x_{0}(t),
\,\,\, t \in [t_{0}, t_{1}].
\end{equation}%

Using expansions (\ref{equ21}) and (\ref{equ22}), for sufficiently small
$\varepsilon \in (0, \varepsilon_{2}]$ for the function $h(t,
\varepsilon)$ on the segment $[\tau+(a+b)\varepsilon, t_{1}]$ we
obtain the following estimate
$$
\|h(t, \varepsilon)\|\leq
\frac{\|\xi\|\varepsilon^{2}}{\Gamma(\alpha+1)}
\left[\frac{\alpha(1-\alpha)a(a+b)}{2}(t-\tau)^{\alpha-2}+
\frac{|o(\varepsilon^{2},t)|}{\varepsilon^{2}}\right.
$$
$$
\left.+\left(\frac{\alpha(1-\alpha)a(a+b)}{2}(t_{1}-\tau)^{\alpha-2}+\frac{|o(\varepsilon^{2},
t_{1})|}{\varepsilon^{2}}\right)\frac{\alpha(a+b)(t-\tau)^{\alpha-1}+\frac{|o(\varepsilon,
t)|}{\varepsilon}}{\alpha(a+b)(t_{1}-\tau)^{\alpha-1}+\frac{|o(\varepsilon,
t_{1})|}{\varepsilon}}\right]
$$
\begin{equation}\label{equ25}
\leq
\frac{\|\xi\|\varepsilon^{2}}{\Gamma(\alpha+1)}\left[\frac{\alpha(1-\alpha)a(a+b)(t-\tau)^{\alpha}}{(t-\tau)^{2}}+
\frac{2\alpha(1-\alpha)a(a+b)}{(t_{1}-\tau)(t-\tau)}(t-\tau)^{\alpha}\right]\leq
k_{1}(t-\tau)^{\alpha-2}\varepsilon^{2}, 
\end{equation}%
where $k_{1}=\frac{3\|\xi\|(1-\alpha)a(a+b)}{\Gamma(\alpha)}.$

Now let's calculate the increment of the functional $J(x(\cdot))$:
$$
\Delta J(x(\cdot))=J(x(\cdot,
\varepsilon))-J(x^{0}(\cdot))=\int\limits_{t_{0}}^{t_{1}}(t_{1}-t)^{\beta-1}[L(t,
x(t, \varepsilon), (^{c}D_{t_{0}+}^{\alpha}x)(t, \varepsilon))
$$
$$
-L(t, x^{0}(t),
(^{c}D_{t_{0}+}^{\alpha}x^{0})(t))]dt=\int\limits_{\tau}^{\tau+a\varepsilon}(t_{1}-t)^{\beta-1}
[L(t, x(t, \varepsilon),
(^{c}D_{t_{0}+}^{\alpha}x^{0})(t)+\xi(1-k(\varepsilon)))
$$
$$
-L(t, x^{0}(t),
(^{c}D_{t_{0}+}^{\alpha}x^{0})(t))]dt+\int\limits_{\tau+a\varepsilon}^{\tau+(a+b)\varepsilon}(t_{1}-t)^{\beta-1}
[L(t, x(t, \varepsilon),
(^{c}D_{t_{0}+}^{\alpha}x^{0})(t)-\xi(\frac{a}{b}+k(\varepsilon)))
$$
$$
-L(t, x^{0}(t), (^{c}D_{t_{0}+}^{\alpha}x^{0})(t))]dt
+\int\limits_{\tau+(a+b)\varepsilon}^{t_{1}}(t_{1}-t)^{\beta-1}
[L(t, x(t,\varepsilon), (^{c}D_{t_{0}+}^{\alpha}x^{0})(t))
$$
\begin{equation}\label{equ26}
- L(t, x^{0}(t),
(^{c}D_{t_{0}+}^{\alpha}x^{0})(t))]dt=:J_{1}(\varepsilon)+J_{2}(\varepsilon)+J_{3}(\varepsilon).
\end{equation}%

Using the mean value theorem for definite integrals, as well as
equalities (\ref{equ20}) and (\ref{equ24}), we have
\begin{equation}\label{equ27}
\lim \limits_{\varepsilon\rightarrow
0}\frac{J_{1}(\varepsilon)}{\varepsilon}=a(t_{1}-\tau)^{\beta-1}
[L(\tau, x^{0}(\tau), (^{c}D_{t_{0}+}^{\alpha}x^{0})(\tau)+\xi)
-L(\tau, x^{0}(\tau), (^{c}D_{t_{0}+}^{\alpha}x^{0})(\tau))],
\end{equation}%
\begin{equation}\label{equ28}
\lim \limits_{\varepsilon\rightarrow
0}\frac{J_{2}(\varepsilon)}{\varepsilon}=b(t_{1}-\tau)^{\beta-1}
[L(\tau, x^{0}(\tau),
(^{c}D_{t_{0}+}^{\alpha}x^{0})(\tau)-\xi\frac{a}{b}) -L(\tau,
x^{0}(\tau), (^{c}D_{t_{0}+}^{\alpha}x^{0})(\tau))]. 
\end{equation}%

Now we estimate $J_{3}(\varepsilon)$. First consider the case
$0<\beta<1$. Taking into account the continuous differentiability
of the integrand with respect to $x$ in some  neighborhood of the
graph $\{(t, x_{0}(t), (^{c}D_{t_{0}+}^{\alpha}x_{0})(t))| t \in
[t_{0}, t_{1}]\}$ and inequality (\ref{equ25}), we have
$$
|J_{3}(\varepsilon)|=|\int\limits_{\tau+(a+b)\varepsilon}^{t_{1}}(t_{1}-t)^{\beta-1}L_{x}(t,
x^{0}(t)+\mu
h(t,\varepsilon),(^{c}D_{t_{0}+}^{\alpha}x^{0})(t))h(t,
\varepsilon)dt|
$$
$$
\leq
k_{3}\varepsilon^{2}\int\limits_{\tau+(a+b)\varepsilon}^{t_{1}}(t_{1}-t)^{\beta-1}
(t-\tau)^{\alpha-2}dt
$$
$$
=k_{3}\varepsilon^{2}\left[\int\limits_{\tau+(a+b)\varepsilon}^{\eta}(t_{1}-t)^{\beta-1}(t-\tau)^{\alpha-2}dt
+\int\limits_{\eta}^{t_{1}}(t_{1}-t)^{\beta-1}(t-\tau)^{\alpha-2}dt
\right]
$$
$$
\leq
k_{3}\varepsilon^{2}\left[\frac{(t_{1}-\eta)^{\beta-1}}{1-\alpha}(((a+b)\varepsilon)^{\alpha-1}
-(\eta-\tau)^{\alpha-1})+\frac{(t_{1}-\eta)^{\beta}}{\beta}
(\eta-\tau)^{\alpha-2}\right]
$$
\begin{equation}\label{equ29}
\leq
\varepsilon^{1+\alpha}\left[\frac{k_{3}(a+b)^{\alpha-1}}{1-\alpha}(t_{1}-\eta)^{\beta-1}+\frac{k_{3}\varepsilon^{1-\alpha}}{\beta}(t_{1}-\eta)^{\beta}
(\eta-\tau)^{\alpha-2}\right],
\end{equation}%
where
$$
k_{2}={{\rm
esssup}}_{t \in [t_{0}, t_{1}]}|L_{x}(t, x^{0}(t)+\mu h(t,
\varepsilon), (^{c}D_{t_{0}+}^{\alpha}x^{0})(t))|,
$$
$$
k_{3}=k_{2}\cdot k_{1}, \,\,\, \tau+(a+b)\varepsilon< \eta< t_{1},
\,\,\, 0<\mu<1.
$$

Now consider the case $\beta \geq 1$. Then we have
$$
|J_{3}(\varepsilon)|\leq
k_{3}\varepsilon^{2}(t_{1}-\tau)^{\beta-1}\int\limits_{\tau+(a+b)\varepsilon}^{t_{1}}(t-\tau)^{\alpha-2}dt
$$
\begin{equation}\label{equ30}
=\frac{k_{3}\varepsilon^{2}(t_{1}-\tau)^{\beta-1}}{1-\alpha}
[((a+b)\varepsilon)^{\alpha-1}-(t_{1}-\tau)^{\alpha-1})]\leq
\frac{k_{3}(a+b)^{\alpha-1}}{1-\alpha}(t_{1}-\tau)^{\beta-1}
\varepsilon^{1+\alpha}. 
\end{equation}%

From estimates (\ref{equ29}) and (\ref{equ30}) it follows that
\begin{equation}\label{equ31}
\lim \limits_{\varepsilon\rightarrow
0}\frac{1}{\varepsilon}J_{3}(\varepsilon)=0. 
\end{equation}%

Since $x^{0}\in strlocmin P_2,$ then the function of one variable
$\Phi(\varepsilon)=J(x(\cdot, \varepsilon))$ has a minimum at the
point $\varepsilon=0,$ i.e. $\Phi(\varepsilon)\geq \Phi(0).$ It
follows that if the derivative on the right at zero $\Phi'(+0)$
exists, then $\Phi'(+0)\geq 0.$

Then taking into account (\ref{equ27}), (\ref{equ28}) and (\ref{equ31}) in (\ref{equ26}) we get
$$
\Phi'(+0)=\lim \limits_{\varepsilon\rightarrow
+0}\frac{\Phi(\varepsilon)-\Phi(0)}{\varepsilon} =\lim
\limits_{\varepsilon\rightarrow 0}\frac{1}{\varepsilon}\Delta
J(x(\cdot))=(t_{1}-\tau)^{\beta-1}\{a[L(\tau, x^{0}(\tau),
(^{c}D_{t_{0}+}^{\alpha}x^{0})(\tau)+\xi)
$$
$$
-L(\tau, x^{0}(\tau),
(^{c}D_{t_{0}+}^{\alpha}x^{0})(\tau))]+b[L(\tau, x^{0}(\tau),
(^{c}D_{t_{0}+}^{\alpha}x^{0})(\tau)-\xi\frac{a}{b})
$$
$$
-L(\tau, x^{0}(\tau), (^{c}D_{t_{0}+}^{\alpha}x^{0})(\tau))]\}\geq
0.
$$
It follows that inequality (\ref{equ19}) is true. The case of a local
minimum is proved similarly.  
\end{proof}

To show the geometric meaning of condition (\ref{equ19}), dividing both
parts of condition (\ref{equ19}) by $a+b$, we get

\[L\left(t,\ x^0\left(t\right),\left({}^c{D^{\alpha }_{t_0+}}x^0\right)\left(t\right)\right)
=L\left(t,\ x^0\left(t\right),\
\frac{a}{a+b}\left(\left({}^c{D^{\alpha
}_{t_0+}}x^0\right)\left(t\right)+\xi \right)+\right.\]
\[\left.+\ \frac{b}{a+b}\left(\left({}^c{D^{\alpha }_{t_0+}}x^0\right)\left(t\right)-\xi \frac{a}{b}\right)\right)\le
\frac{a}{a+b}L\left(t,x^0\left(t\right),\left({}^c{D^{\alpha
}_{t_0+}}x^0\right)\left(t\right)+\xi \right)+\ \]
\[+\frac{b}{a+b}L\left(t,x^0\left(t\right),\left({}^c{D^{\alpha }_{t_0+}}x^0\right)\left(t\right)
-\xi \frac{a}{b}\right),\ \ \ \ \ \ t\in T,\ \ \ a,\ b>0,\ \ \ \xi
\in \mathbb{R}^n.\]

From this inequality in the case $\ n=1 \ $ it follows that
condition (\ref{equ19}) has a clear geometric meaning on the extremal $ x^0
\ $ : for a fixed $ \ t \in T, \ $ the point (\
$\left({}^c{D^{\alpha }_{t_0+}}x^0\right)\left(t\right), \
L\left(t,x^0\left(t\right), \right. \ $ $\left.
\left({}^c{D^{\alpha }_{t_0+}}x^0\right)\left(t\right)\right)$
lies no higher than any chord with ends on different sides of
${}^c{D^{\alpha }_{t_0+}}x^0$ on the graph of the function
$L=L\left({}^c{D^{\alpha }_{t_0+}}x\right)=L(t,\
x^0\left(t\right),\ {}^c{D^{\alpha }_{t_0+}}x)$ (as a function of
${}^c{D^{\alpha }_{t_0+}}x$).

Note that condition (\ref{equ19}) is always satisfied if for each $t\in T$
and $x\in \mathbb{R}^n$ the integrand $L(t,\ x,\ {}^c{D^{\alpha
}_{t_0+}}x)$ is a convex function with respect to ${}^c{D^{\alpha
}_{t_0+}}x.$

\begin{corollary}\label{cor61}\textbf{(The Weierstrass condition)} Let the
function $x^{0}\in PC^{\alpha}([t_{0}, t_{1}], \mathbb{R}^{n})$
provide a strong (weak) local minimum in problem ($P_2$) ($x^{0} \in
strlocmin P_2$ $(x^{0} \in wlocmin P_2)$), the functions $L, \ L_{x},
\ L_{y}$ are continuous in some neighborhood of the graph $\{(t,
x^{0}(t),$ $(^{c}D_{t_{0}+}^{\alpha}x^{0})(t))|t\in [t_{0},
t_{1}]\}$. Then (Then there exists a number $\delta >0$ such that)
the inequality
\begin{equation}\label{equ32}
E(\tau, x^{0}(\tau),(^{c}D_{t_{0}+}^{\alpha}x^{0})(\tau),
(^{c}D_{t_{0}+}^{\alpha}x^{0})(\tau)+\xi)\geq 0, \,\,\,\forall
\tau \in T, \,\, \forall \xi \in \mathbb{R}^{n} 
\end{equation}%
$$
(\forall \tau \in T, \,\,\, \forall \xi \in B_{\delta/2}(0)=\{\xi|
\xi \in \mathbb{R}^{n}, \ \|\xi\|\leq \frac{\delta}{2}\}),
$$
is satisfied on $x^{0},$ where $E(t, x, y, z)=L(t, x, z)-L(t, x,
y)-\langle L_{y}(t, x, y), z-y \rangle$ is a Weierstrass function.
\end{corollary}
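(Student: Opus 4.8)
The plan is to obtain (\ref{equ32}) as a direct limiting consequence of inequality (\ref{equ19}) in Theorem \ref{teo61}, the only new ingredient being the extra hypothesis that $L_y$ is continuous in a neighbourhood of the graph. Fix $\tau\in T$ and $\xi\in\mathbb{R}^{n}$ (in the weak case, fix $\tau\in T$ and $\xi\in B_{\delta/2}(0)$), and abbreviate $y=(^{c}D_{t_{0}+}^{\alpha}x^{0})(\tau)$. In (\ref{equ19}) I would specialise $b=1$ and $a=t$ with $t>0$ a free parameter (subject to the extra restriction $0<t<1$ in the weak case, which is still compatible with $t\to 0^{+}$). Since the terms at $(a+b)=(t+1)$ split off, (\ref{equ19}) becomes
\[
t\bigl[L(\tau,x^{0}(\tau),y+\xi)-L(\tau,x^{0}(\tau),y)\bigr]+\bigl[L(\tau,x^{0}(\tau),y-t\xi)-L(\tau,x^{0}(\tau),y)\bigr]\ge 0 .
\]

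Dividing by $t>0$ and rearranging to isolate the difference quotient in the last slot gives
\[
L(\tau,x^{0}(\tau),y+\xi)-L(\tau,x^{0}(\tau),y)\ge -\,\frac{L(\tau,x^{0}(\tau),y-t\xi)-L(\tau,x^{0}(\tau),y)}{t}.
\]
Letting $t\to 0^{+}$, the right-hand side converges to $\langle L_{y}(\tau,x^{0}(\tau),y),\xi\rangle$: this is exactly where the continuity (hence existence) of $L_{y}$ near $(\tau,x^{0}(\tau),y)$ is used, since it identifies the one-sided directional derivative of $z\mapsto L(\tau,x^{0}(\tau),z)$ at $z=y$ in the direction $-\xi$ with $-\langle L_{y}(\tau,x^{0}(\tau),y),\xi\rangle$. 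Passing to the limit in the displayed inequality yields
\[
L(\tau,x^{0}(\tau),y+\xi)-L(\tau,x^{0}(\tau),y)-\langle L_{y}(\tau,x^{0}(\tau),y),\xi\rangle\ge 0 .
\]

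Finally I would recognise the left-hand side as $E(\tau,x^{0}(\tau),y,y+\xi)$ upon setting $z:=y+\xi$, so that $z-y=\xi$; this is precisely (\ref{equ32}). The weak version follows verbatim, since throughout the argument $\xi$ stays fixed in $B_{\delta/2}(0)$ while only $t$ tends to $0$. I do not expect a genuine obstacle here: Theorem \ref{teo61} carries all the analytic weight, and the only point needing care is the justification of the limit of the difference quotient, which is a routine consequence of the smoothness of $L_{y}$ — indeed this is the sole reason the hypothesis on $L_{y}$ is added in passing from Theorem \ref{teo61} to this corollary.
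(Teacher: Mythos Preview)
Your proof is correct and follows essentially the same route as the paper: both start from inequality (\ref{equ19}), expand the term $L(\tau,x^{0}(\tau),y-\frac{a}{b}\xi)$ to first order in $a$ (the paper via Taylor's formula, you via the difference quotient), divide by $a$, and let $a\to 0^{+}$. Your specialisation $b=1$, $a=t$ is a harmless simplification of the paper's argument, which keeps $b$ general throughout.
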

\begin{proof} By Taylor formula we have
$$
L(\tau, x^{0}(\tau),
(^{c}D_{t_{0}+}^{\alpha}x^{0})(\tau)-\frac{a}{b}\xi)= L(\tau,
x^{0}(\tau), (^{c}D_{t_{0}+}^{\alpha}x^{0})(\tau))
$$
$$
-\frac{a}{b}\langle L_{y}(\tau, x^{0}(\tau),
(^{c}D_{t_{0}+}^{\alpha}x^{0})(\tau)), \xi\rangle+o(a),
$$
where $\lim \limits_{a\rightarrow +0}\frac{o(a)}{a}=0,$ $\tau \in
T,$ $a, \ b>0,$ and $\xi \in \mathbb{R}^{n}.$

Taking into account this expansion in (\ref{equ19}), we get
$$
a E(\tau, x^{0}(\tau),(^{c}D_{t_{0}+}^{\alpha}x^{0})(\tau),
(^{c}D_{t_{0}+}^{\alpha}x^{0})(\tau)+\xi)+b \ o(a)\geq 0.
$$

Dividing both sides of this inequality by $a$ and passing to the
limit at $a \rightarrow +0$, we get (\ref{equ32}). The case of a local
minimum is  proved similarly. 
\end{proof}

\begin{remark}\label{rem61} Under the conditions of Corollary \ref{cor61},
inequalities (\ref{equ19}) and (\ref{equ32}) are equivalent.

To prove this fact, it is sufficient to show that if inequality
(\ref{equ32}) is satisfied, the inequality (\ref{equ19}) is also satisfied. Let
inequality (\ref{equ32}) be satisfied. Then for all $\tau \in T,$ $a,b>0$
and $\xi \in \mathbb{R}^n$ the following inequalities hold:

\[L\left(\tau ,x^0\left(\tau \right),\left({}^c{D^{\alpha }_{t_0+}}x^0\right)
\left(\tau \right)+\xi \right)-L\left(\tau ,x^0\left(\tau
\right),\left(^{c}{D^{\alpha }_{t_0+}}x^0\right)\left(\tau
\right)\right) \]

\[\ge \left\langle L_y\left(\tau ,x^0\left(\tau \right),
\left({}^c{D^{\alpha }_{t_0+}}x^0\right)\left(\tau
\right)\right),\xi \right\rangle ,\]

\[L\left(\tau ,x^0\left(\tau \right),\left({}^c{D^{\alpha }_{t_0+}}
x^0\right)\left(\tau \right)-\frac{a}{b}\xi \right)-L\left(\tau
,x^0\left(\tau \right), \left({}^c{D^{\alpha
}_{t_0+}}x^0\right)\left(\tau \right)\right) \]
\[\ge -\frac{a}{b}\left\langle L_y\left(\tau ,x^0\left(\tau \right),
\left({}^c{D^{\alpha }_{t_0+}}x^0\right)\left(\tau
\right)\right),\xi \right\rangle .\] We multiply both parts of the
first inequality by $a>0$, the second inequality by $b>0$ and
adding the resulting inequalities, we obtain inequality (\ref{equ19}). This
completes the proof. 
\end{remark}
To illustrate inequality (\ref{equ19}), consider the following example.\\
\textbf{Exsample 1} Let us consider the problem
\begin{equation}\label{equ33}
 \int \limits_{0}^{1}(1-t)^{\beta-1}[((^{c}D_{0+}^{\alpha}x)(t))^{5}+
|(^{c}D_{0+}^{\alpha}x)(t)|]dt \rightarrow \min, \,\,\,
x(0)=x(1)=0,
\end{equation}%
where $\beta > 0, \, 0<\alpha\leq 1,$ $x\in PC^{\alpha}([0, 1], \
\mathbb{R})$, $L(t, x,
^{c}D_{0+}^{\alpha}x)=(^{c}D_{0+}^{\alpha}x)^{5}+|^{c}D_{0+}^{\alpha}x|.$

Along the admissible function $x^{0}(t)=0,$ $t \in [0, \ 1]$ we
have
$$
L(t, x^{0}(t), (^{c}D_{0+}^{\alpha}x^{0})(\tau))=0, \,\,\, L(t,
x^{0}(t), (^{c}D_{0+}^{\alpha}x^{0})(t)+\xi)
$$
$$
=\xi^{5}+|\xi|, \,\,\, L(t, x^{0}(t),
(^{c}D_{0+}^{\alpha}x^{0})(t)-\xi\frac{a}{b})=-
\xi^{5}\frac{a^{5}}{b^{5}}+|\xi|\frac{a}{b}, \,\, \xi \in
\mathbb{R}.
$$

Then inequality (\ref{equ19}) will take the form
$$
a(\xi^{5}+|\xi|)+b(-\xi^{5}\frac{a^{5}}{b^{5}}+|\xi|\frac{a}{b})
=a\xi^{5}(1-\frac{a^{4}}{b^{4}})+2a|\xi|\geq 0.
$$

It is obvious that this inequality does not hold, for example, for
$a=2, \, b=1$ and  $\xi=1$, therefore in problem (\ref{equ33}) the
admissible function $x^{0}(t)=0,$ $t \in [0, \ 1]$ is not a strong
local minimum.

If $\frac{a}{b}<1$ and $|\xi|\leq \frac{1}{2},$ then inequality
(\ref{equ19}) takes the form
$$
a \xi^{4}\left[\left(1-\frac{a^{4}}{b^{4}}\right)\xi+2 \right]\geq
0
$$
and therefore, the function $x^{0}=0$ can be a local minimum in
problem (\ref{equ33}). On the other hand, for
$|(^{c}D_{0+}^{\alpha}x)(t)|\leq 1,$ it follows from the
inequality
$$
(^{c}D_{0+}^{\alpha}x)^{5}(t)+|(^{c}D_{0+}^{\alpha}x)(t)|\geq
(^{c}D_{0+}^{\alpha}x)^{4}(t)((^{c}D_{0+}^{\alpha}x)(t)+1)\geq 0,
$$
that the function $x^{0}=0$ is indeed a local minimum in problem
(\ref{equ33}).

\textbf{Example 2} Let us consider the problem
\begin{equation}\label{equ34}
\int
\limits_{0}^{1}(1-t)^{\beta-1}(^{c}D_{0+}^{\alpha}x)^{3}(t)dt\rightarrow
\min, \,\,\, x(0)=0, \,\, x(1)=1, 
\end{equation}%
where $\beta > 0, \, 0<\alpha\leq 1,$ $x \in PC^{\alpha}([0, 1], \
\mathbb{R})$, $L(t, x,
^{c}D_{0+}^{\alpha}x)=(^{c}D_{0+}^{\alpha}x)^{3}.$

To solve the problem we use Theorem \ref{teo41}. Let us first consider the
case $0 < \alpha < \beta$. In this case, by condition (\ref{equ5}), we have
$$
(^{c}D_{0+}^{\alpha}x)^{2}(t)=0, \,\,\, t \in [0, \ 1].
$$
Hence we get $x(t)=x(0)=0,$ $t \in [0, \ 1]$. Therefore, problem
(\ref{equ34}) does not have solution.

Now consider the case $0 < \beta \leq \alpha \leq 1.$ In this case
equation (\ref{equ6}) takes the form
$$
3(^{c}D_{0+}^{\alpha}x)^{2}(t)=\frac{k}{\Gamma(\alpha)}(1-t)^{\alpha-\beta},
\,\,\, k \in \mathbb{R}, \,\,\, t \in [0, \ 1].
$$

The solution of this equation, satisfying the conditions $x(0)=0,$
$x(1)=1,$ has the form
\begin{equation}\label{equ35}
x^{0}(t)=\frac{3\alpha-\beta}{2}\int\limits_{0}^{t}(t-\tau)^{\alpha-1}(1-\tau)^{\frac{\alpha-\beta}{2}}d\tau.
\end{equation}%

From here we have
$(^{c}D_{0+}^{\alpha}x^{0})(t)=\frac{(3\alpha-\beta)\Gamma(\alpha)}{2}(1-t)^{\frac{\alpha-\beta}{2}}.$
In the case of $0< \alpha=\beta \leq 1$ we get
$x^{0}(t)=t^{\alpha}$ and
$(^{c}D_{0+}^{\alpha}x^{0})(t)=\Gamma(\alpha+1).$ We will show
that for $0< \beta=\alpha \leq 1$ the extremal
$x_{0}(t)=t^{\alpha}$ provides a weak local minimum for problem
(\ref{equ34}). Indeed, let $h \in PC_{0}^{\alpha}([0, \ 1], \mathbb{R}).$
Then
$$
J(x^{0}(\cdot)+h(\cdot))-J(x^{0}(\cdot))=\int\limits_{0}^{1}(1-t)^{\alpha-1}
[3\Gamma^{2}(\alpha+1)(^{c}D_{0+}^{\alpha}h)(t)
$$
$$
+3\Gamma(\alpha+1)(^{c}D_{0+}^{\alpha}h)^{2}(t)+(^{c}D_{0+}^{\alpha}h)^{3}(t)]dt
=\int\limits_{0}^{1}(1-t)^{\alpha-1}(^{c}D_{0+}^{\alpha}h)^{2}(t)
[3\Gamma(\alpha+1)+(^{c}D_{0+}^{\alpha}h)(t)]dt.
$$

From this it is clear that if $\|h(\cdot)\|_{1} <
3\Gamma(\alpha+1),$ then
$3\Gamma(\alpha+1)+(^{c}D_{0+}^{\alpha}h)(t)> 0$ and,
consequently,
$$
J(x^{0}(\cdot)+h(\cdot))\geq J(x^{0}(\cdot)), \,\,\,\,
\texttt{i.e.}\,\, x^{0}\in locmin.
$$

Let us show that (\ref{equ35}) does not provide a strong extremum for
problem (\ref{equ34}). It is obvious that the Weierstrass function has the
form $ E=\xi^{2}(\xi+3(^{c}D_{0+}^{\alpha}x^{0})(t)).$ At
$\xi\neq$
$-\frac{3(3\alpha-\beta)}{2}\Gamma(\alpha)(1-t)^{\frac{\alpha-\beta}{2}}$
the Weierstrass function can take both positive and negative
values. Therefore the extremal (\ref{equ35}) does not give a strong
extremum to the functional.

\begin{corollary}\label{cor62} \textbf{(The Legendre condition)} In addition to
the conditions of Corollary \ref{cor61}, let the integrant $L(t,x,y)\ $ be
twice differentiable with respect to $\, {}^c{D^{\alpha }_{t_0+}}x
\, $ at the points of the set $\ \Gamma
=\left\{(t,x^0\left(t\right),\left({}^c{D^{\alpha
}_{t_0+}}x^0\right)\right.$ $\left.(t))\left|t\in T\right.\
\right\}.$ Then, in order for the admissible function $x^0$ to
provide a weak local minimum in problem ($P_2$), it is necessary that
for any point $t\in T$ and any vector $\mu \in \mathbb{R}^n$ the
following inequality holds
\begin{equation}\label{equ36}
\left\langle L_{yy}(t)\mu ,\mu \right\rangle \ge 0, 
\end{equation}%
where
$L_{yy}\left(t\right)=L_{yy}\left(t,x^0(t),\left({}^c{D^{\alpha
}_{t_0+}}x^0\right)\left(t\right)\right).$
\end{corollary}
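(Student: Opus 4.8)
The plan is to derive the Legendre condition \eqref{equ36} as a direct consequence of the Weierstrass condition \eqref{equ32} from Corollary \ref{cor61}, mimicking the classical argument. Fix a point $t \in T$ and a vector $\mu \in \mathbb{R}^n$. Since $x^0$ provides a weak local minimum, Corollary \ref{cor61} guarantees a $\delta > 0$ such that the Weierstrass inequality $E(t, x^0(t), (^{c}D_{t_{0}+}^{\alpha}x^0)(t), (^{c}D_{t_{0}+}^{\alpha}x^0)(t)+\xi) \ge 0$ holds for all $\xi$ with $\|\xi\| \le \delta/2$. The idea is to substitute $\xi = s\mu$ for a small scalar parameter $s > 0$ (small enough that $\|s\mu\| \le \delta/2$), expand $L$ in $y$ via the second-order Taylor formula around $(^{c}D_{t_{0}+}^{\alpha}x^0)(t)$, and extract the quadratic term.

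Concretely, the first step is to write, using the assumed twice-differentiability of $L$ in $y$ at points of $\Gamma$,
\[
L(t, x^0(t), (^{c}D_{t_{0}+}^{\alpha}x^0)(t)+s\mu) = L(t, x^0(t), (^{c}D_{t_{0}+}^{\alpha}x^0)(t)) + s\langle L_y(t), \mu\rangle + \tfrac{s^2}{2}\langle L_{yy}(t)\mu, \mu\rangle + o(s^2),
\]
where $L_y(t)$ and $L_{yy}(t)$ are evaluated along the extremal as in the statement. Plugging this into the definition $E(t,x,y,z) = L(t,x,z) - L(t,x,y) - \langle L_y(t,x,y), z-y\rangle$ with $y = (^{c}D_{t_{0}+}^{\alpha}x^0)(t)$ and $z = y + s\mu$, the zeroth- and first-order terms cancel exactly, leaving
\[
E(t, x^0(t), (^{c}D_{t_{0}+}^{\alpha}x^0)(t), (^{c}D_{t_{0}+}^{\alpha}x^0)(t)+s\mu) = \tfrac{s^2}{2}\langle L_{yy}(t)\mu, \mu\rangle + o(s^2) \ge 0.
\]
Dividing by $s^2/2 > 0$ and letting $s \to +0$ yields $\langle L_{yy}(t)\mu, \mu\rangle \ge 0$, which is \eqref{equ36}. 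Since $t \in T$ and $\mu \in \mathbb{R}^n$ were arbitrary, the proof is complete.

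The argument is essentially routine once \eqref{equ32} is in hand; the only point requiring a little care is the validity of the Taylor expansion with a genuine $o(s^2)$ remainder at the point $t$ — this is where the hypothesis that $L$ is twice differentiable in $y$ precisely on the set $\Gamma$ (rather than merely having the Weierstrass inequality) is used. One should note that no compact-support variation of $h$ together with its Caputo derivative is needed here: the nonlocality obstacle discussed in the introduction is bypassed entirely because the Weierstrass condition has already been established via the special variation $h(t,\varepsilon)$ of Theorem \ref{teo61}, and the passage from Weierstrass to Legendre is purely pointwise and algebraic. I would also remark, as the paper does, that the local modification of the Weierstrass condition (valid for $\xi \in B_{\delta/2}(0)$ in the weak-minimum case) is exactly what makes the restriction $\|s\mu\| \le \delta/2$ harmless, since we only ever let $s \to 0$.
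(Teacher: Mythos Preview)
Your proof is correct and follows essentially the same route as the paper: both fix $\tau\in T$ and $\mu\in\mathbb{R}^n$, invoke the local Weierstrass inequality \eqref{equ32} from Corollary~\ref{cor61} with $\xi=\varepsilon\mu$ (your $s\mu$), apply the second-order Taylor expansion of $L$ in $y$ to obtain $E=\tfrac{\varepsilon^2}{2}\langle L_{yy}(\tau)\mu,\mu\rangle+o(\varepsilon^2)\ge 0$, and divide by $\varepsilon^2$ before passing to the limit. Your additional remarks on why the nonlocality obstacle is bypassed and why the restriction $\|s\mu\|\le\delta/2$ is harmless are accurate and consonant with the paper's discussion.
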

\begin{proof} Let the admissible function $x^0$ be a weak local
minimum in problem ($P_2$). Then it is obvious that under the
conditions of Corollary \ref{cor61}, the inequality (\ref{equ32}) is satisfied. Now
we fix the point $\tau \in T$ and the vector $\mu \in
\mathbb{R}^n$. Then we can choose the number ${\varepsilon }_1>0$
so that for all $\varepsilon \in (-{\varepsilon }_1,\ {\varepsilon
}_1)$ the inclusion $\varepsilon \mu \in B_{\frac{\delta }{2}}(0)$
is satisfied. By Taylor's formula we obtain

\[L\left(\tau ,x^0\left(\tau \right),\left({}^c{D^{\alpha }_{t_0+}}x^0\right)
\left(\tau \right)+\xi \mu \right)=L\left(\tau ,x^0\left(\tau
\right), \left({}^c{D^{\alpha }_{t_0+}}x^0\right)\left(\tau
\right)\right)\]
\[+\varepsilon \left\langle L_y\left(\tau \right),\mu \right\rangle
+\frac{{\varepsilon }^2}{2}\left\langle L_{yy}\left(\tau \right)\mu
,\mu \right\rangle +o\left({\varepsilon }^2\right).\]

 Hence, taking into account the Weierstrass function and inequality (\ref{equ32}), we have
\[E\left(\tau ,x^0\left(\tau \right),\left({}^c{D^{\alpha }_{t_0+}}x^0\right)
\left(\tau \right),\ \left({}^c{D^{\alpha
}_{t_0+}}x^0\right)\left(\tau \right) +\varepsilon \mu
\right)=\frac{{\varepsilon }^2}{2}\left\langle L_{yy}\left(\tau
\right)\mu,\mu \right\rangle +o\left({\varepsilon }^2\right)\ge
0,\] where $\lim \limits_{\varepsilon \to 0} \frac{o({\varepsilon
}^2)}{{\varepsilon }^2}=0.$

Dividing both parts of the last inequality by ${\varepsilon }^2$
and passing to the limit at $\varepsilon \to 0$, we obtain
inequality (\ref{equ36}).
\end{proof}

\begin{corollary}\label{cor63} \textbf{(The second Weierstrass-Erdman condition).}
Let $0<\alpha \le 1,$ $\beta
>0$ and the vector function $x^0$ provides a strong local minimum for problem ($P_2$), then at any corner point $t\in A$ of the
function $x^0$ the equality

\[L\left(t,\ x^0\left(t\right),\left({}^c{D^{\alpha
}_{t_0+}}x^0\right)\left(t-0\right)\right)-\left\langle
L_y\left(t,x^0\left(t\right),\left({}^c{D^{\alpha
}_{t_0+}}x^0\right)\left(t-0\right)\right),\left({}^c{D^{\alpha
}_{t_0+}}x^0\right)\left(t-0\right)\right\rangle \]
\begin{equation}\label{equ37}
=L\left(t,\ x^0\left(t\right),\left({}^c{D^{\alpha
}_{t_0+}}x^0\right)\left(t+0\right)\right)-\left\langle
L_y\left(t,x^0\left(t\right),\left({}^c{D^{\alpha
}_{t_0+}}x^0\right)\left(t+0\right)\right),\left({}^c{D^{\alpha
}_{t_0+}}x^0\right)\left(t+0\right)\right\rangle  
\end{equation}%
holds.
\end{corollary}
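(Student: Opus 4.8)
The plan is to obtain \eqref{equ37} by evaluating the Weierstrass inequality of Corollary~\ref{cor61} on the two sides of the corner point and then closing the two resulting inequalities into the desired equality with the help of the first Weierstrass--Erdmann condition of Corollary~\ref{cor41}.

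First, since a strong local minimizer of ($P_2$) is in particular a weak local minimizer, both Corollary~\ref{cor61} and Corollary~\ref{cor41} apply to $x^{0}$. Fix a corner point $\sigma\in A$ and abbreviate $y^{-}:=({}^{c}D_{t_{0}+}^{\alpha}x^{0})(\sigma-0)$, $y^{+}:=({}^{c}D_{t_{0}+}^{\alpha}x^{0})(\sigma+0)$, $x_{\sigma}:=x^{0}(\sigma)$; the one-sided limits $y^{\pm}$ exist because $({}^{c}D_{t_{0}+}^{\alpha}x^{0})\in PC([t_{0},t_{1}],\mathbb{R}^{n})$ by Proposition~\ref{pro21}, and since $A$ is finite there is a punctured neighborhood of $\sigma$ lying entirely in $T$. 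By Corollary~\ref{cor61}, inequality \eqref{equ32} holds at every $\tau\in T$ for every $\xi\in\mathbb{R}^{n}$; choosing, for $\tau\in T$ with $\tau<\sigma$, the vector $\xi=y^{+}-({}^{c}D_{t_{0}+}^{\alpha}x^{0})(\tau)$, it reads $E\bigl(\tau,x^{0}(\tau),({}^{c}D_{t_{0}+}^{\alpha}x^{0})(\tau),y^{+}\bigr)\ge 0$. Letting $\tau\to\sigma$ with $\tau<\sigma$ and using $x^{0}\in C$, $({}^{c}D_{t_{0}+}^{\alpha}x^{0})(\tau)\to y^{-}$, together with the assumed continuity of $L,L_{x},L_{y}$, this passes to the limit:
$$L(\sigma,x_{\sigma},y^{+})-L(\sigma,x_{\sigma},y^{-})-\bigl\langle L_{y}(\sigma,x_{\sigma},y^{-}),\,y^{+}-y^{-}\bigr\rangle\ge 0 . \qquad(\star)$$
Running the same argument for $\tau\to\sigma$ with $\tau>\sigma$ and $\xi=y^{-}-({}^{c}D_{t_{0}+}^{\alpha}x^{0})(\tau)$ gives the symmetric inequality
$$L(\sigma,x_{\sigma},y^{-})-L(\sigma,x_{\sigma},y^{+})-\bigl\langle L_{y}(\sigma,x_{\sigma},y^{+}),\,y^{-}-y^{+}\bigr\rangle\ge 0 . \qquad(\star\star)$$

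Next, by the first Weierstrass--Erdmann condition (Corollary~\ref{cor41}, equation \eqref{equ8}), the vector $p:=L_{y}(\sigma,x_{\sigma},y^{-})=L_{y}(\sigma,x_{\sigma},y^{+})$ is well defined. Substituting this common value into $(\star)$ and into $(\star\star)$ turns them into $L(\sigma,x_{\sigma},y^{+})-\langle p,y^{+}\rangle\ge L(\sigma,x_{\sigma},y^{-})-\langle p,y^{-}\rangle$ and $L(\sigma,x_{\sigma},y^{-})-\langle p,y^{-}\rangle\ge L(\sigma,x_{\sigma},y^{+})-\langle p,y^{+}\rangle$, respectively, whence
$$L(\sigma,x_{\sigma},y^{+})-\langle p,y^{+}\rangle=L(\sigma,x_{\sigma},y^{-})-\langle p,y^{-}\rangle .$$
Recalling that $p=L_{y}(\sigma,x_{\sigma},y^{\pm})$ and that $x_{\sigma}=x^{0}(\sigma)$ is well defined by the continuity of $x^{0}$, this is exactly \eqref{equ37}.

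The two one-sided limit passages are routine, the only prerequisite being that $({}^{c}D_{t_{0}+}^{\alpha}x^{0})$ has discontinuities of the first kind only. The point that must not be missed is that \eqref{equ37} does \emph{not} follow from the Weierstrass condition alone: adding $(\star)$ and $(\star\star)$ yields only the one-sided monotonicity relation $\bigl\langle L_{y}(\sigma,x_{\sigma},y^{+})-L_{y}(\sigma,x_{\sigma},y^{-}),\,y^{+}-y^{-}\bigr\rangle\ge 0$, so the first Weierstrass--Erdmann condition is genuinely needed to collapse the pair of inequalities into an equality; its use here is legitimate precisely because a strong minimizer of ($P_2$) is also a weak minimizer, so that Corollary~\ref{cor41} is available.
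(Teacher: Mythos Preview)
Your proof is correct and follows essentially the same route as the paper: obtain the two Weierstrass inequalities $E(\sigma,x_\sigma,y^{-},y^{+})\ge 0$ and $E(\sigma,x_\sigma,y^{+},y^{-})\ge 0$ from Corollary~\ref{cor61}, then invoke the first Weierstrass--Erdmann condition (Corollary~\ref{cor41}) to identify $L_{y}(\sigma,x_\sigma,y^{-})=L_{y}(\sigma,x_\sigma,y^{+})$ and collapse the pair of inequalities into the equality~\eqref{equ37}. The only presentational difference is that you make the passage from $\tau\in T$ to the corner point $\sigma\in A$ explicit via one-sided limits, whereas the paper writes the two Weierstrass inequalities directly at $\sigma$ with the one-sided derivative values; your version is slightly more careful in this respect, since Corollary~\ref{cor61} is stated only for $\tau\in T$.
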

\begin{proof} Since the vector function $x^0$ provides a strong
local extremum for problem ($P_2$), this extremum is also a weak local
extremum. Then, according to Corollary \ref{cor41}, the equality
\begin{equation}\label{equ38}
L_y\left(t,x^0\left(t\right),\left({}^c{D^{\alpha
}_{t_0+}}x^0\right)\left(t-0\right)\right)=L_y\left(t,x^0\left(t\right),\left({}^c{D^{\alpha
}_{t_0+}}x^0\right)\left(t+0\right)\right),\ \ t\in A, 
\end{equation}%
is satisfied.

To obtain equality (\ref{equ37}), we set

\[{\mathcal Z}\left(t,y\right)=L\left(t,\ x^0\left(t\right),\ y\right)-\left\langle
L_y\left(t,x^0\left(t\right),\left({}^c{D^{\alpha
}_{t_0+}}x^0\right)\left(t\right)\right),y\right\rangle .\]

From Corollary \ref{cor61} it follows that the following inequalities are
satisfied
\[E(t, x^{0}(t), (^{c}D_{t_{0}+}^{\alpha}x^{0})(t-0),
(^{c}D_{t_{0}+}^{\alpha}x^{0})(t+0))\]
\[={\mathcal Z}\left(t,\ \left({}^c{D^{\alpha }_{t_0+}}x^0\right)
\left(t+0\right)\right)-{\mathcal Z}\left(t,\ \left({}^c{D^{\alpha
}_{t_0+}}x^0\right)\left(t-0\right)\right)\ge 0,\ \ t\in A,\] and
\[E\left(t,\ x^0\left(t\right),\left({}^c{D^{\alpha }_{t_0+}}x^0\right)\left(t+0\right),\left({}^c{D^{\alpha }_{t_0+}}x^0\right)\left(t-0\right)\ \right)=\]
\[={\mathcal Z}\left(t,\ \left({}^c{D^{\alpha }_{t_0+}}x^0\right)\left(t-0\right)\right)
-{\mathcal Z}\left(t,\ \left({}^c{D^{\alpha
}_{t_0+}}x^0\right)\left(t+0\right)\right)\ge 0,\ \ t\in A.\]
Using equality (38) from these inequalities, we obtain equality
(\ref{equ37}).
\end{proof}

\section{Conclusion}

In this paper, we study the problems of minimization of a
functional depending on the fractional Caputo derivative of order
$0<\alpha \leq 1$ and the fractional Riemann-Liouville integral of
order $\beta >0$ under fixed endpoints. Taking into account the
relationships between the parameters $\alpha$ and $\beta$, a
fractional analogue of the most important lemmas of the calculus
of variations, the fundamental Du Bois-Reymond lemma, is proved.
Using this lemma, the necessary Euler-Lagrange conditions in
integral form are obtained for the simplest fractional variational
problem and for the fractional isoperimetric problem. Using a
special variation depending on some numerical parameters, the
necessary first-order conditions for strong and weak local minima
are proved in the case of nonsmoth Lagrangians with respect to the
argument $^{c}D_{t_{0}+}^{\alpha}x$. From these necessary
conditions, as a consequence, we obtain the Weierstrass condition
and its local modification.
Unlike some works found in the literature, we prove the Legendre
conditions by the standard classical method via the Weierstrass
condition. In addition, the necessary Weierstrass-Erdmann
conditions at the corner points are obtained. Examples are
provided to illustrate the significance of the main results
obtained.

In this paper, for a simple fractional variational problem, we have proved three of the four fundamental conditions of the classical calculus of variations, namely the Euler-Lagrange, Legendre and Weierstrass conditions. It should be noted that the question of the applicability of the Jacobi condition to simple fractional variation problems still remains open.
Note that the method presented here can be used to derive
optimality conditions for other fractional calculus problems.
\\
\\
\textbf{Data availibility} No datasets were generated or analysed during the current study.
\subsection*{Declarations}
\textbf{Conflict of interest} The authors declare no potential Conflict of interest.

\bigskip

%\nocite{*}% Show all bib entries - both cited and uncited; comment this line to view only cited bib entries;
%\bibliography{wileyNJD-AMA}%

\end{document}